\newtheorem{thm}{Theorem}
\newtheorem{theorem}{Theorem}[section]
\newtheorem{lem}[theorem]{Lemma}
\newtheorem{pro}[theorem]{Proposition}
\newtheorem{cor}[theorem]{Corollary}
\theoremstyle{definition}
\newtheorem{definition}[theorem]{Definition}
\numberwithin{equation}{section}
\DeclareMathOperator{\lev}{\mathcal{L}}
\DeclareMathOperator{\st}{st}
\DeclareMathOperator{\rst}{rst}
\DeclareMathOperator{\sym}{Sym}
\DeclareMathOperator{\Z}{\mathbb{Z}}
\DeclareMathOperator{\N}{\mathbb{N}}
\newcommand{\E}{\mathcal{E}}
\newcommand{\F}{\mathcal{F}}
\newcommand{\G}{\mathcal{G}}
\newcommand{\T}{\mathcal{T}}
\DeclareMathOperator{\Ima}{im}
\DeclareMathOperator{\Aut}{Aut}
\begin{document}

\title{GGS-groups over primary trees: Branch structures}
\author{E. Di Domenico}
\address{Department of Mathematics, University of Trento, Trento, Italy/Department of Mathematics, University of the Basque Country, Bilbao, Spain}
\email{elena.didomenico@unitn.it}
\thanks{The first author was partly supported by the National Group for Algebraic and Geometric Structures, and their Applications (GNSAGA--INdAM), and also acknowledges support from the Department of Mathematics of the University of Trento.
The first and second authors are supported by the Spanish Government, grant PID2020-117281GB-I00, partly with FEDER funds, and by the Basque Government, grant IT483-22.
The first and third authors are members of GNSAGA--INdAM}

\author{G. A. Fern\'{a}ndez-Alcober}
\address{Department of Mathematics, University of the Basque Country, Bilbao, Spain}
\email{gustavo.fernandez@ehu.eus}


\author{N. Gavioli}
\address{Department of Information Engineering, Computer Science and Mathematics, L'Aquila, Italy}
\email{norberto.gavioli@univaq.it}
\subjclass[2000]{Primary 20E08; Secondary 20E26}



\keywords{Group theory, automorphisms of rooted trees, branch groups, weakly branch groups}

\begin{abstract}
We study branch structures in Grigorchuk-Gupta-Sidki groups (GGS-groups) over primary trees, that is, regular rooted trees of degree $p^n$ for a prime $p$.
Apart from a small set of exceptions for $p=2$, we prove that all these groups are weakly regular branch over $G''$.
Furthermore, in most cases they are actually regular branch over $\gamma_3(G)$.
This is a significant extension of previously known results regarding periodic GGS-groups over primary trees and general GGS-groups in the case $n=1$.
We also show that, as in the case $n=1$, a GGS-group generated by a constant vector is not branch.
\end{abstract}

\maketitle

\section{Introduction}

Groups of automorphisms of regular rooted trees are a rich source of examples with very interesting properties in group theory.
The first Grigorchuk group, defined by Grigorchuk in 1980 \cite{Grigorcuk1980},
is one of the first instances of an infinite finitely generated periodic group, thus providing a negative solution to the General Burnside  Problem.
It is also the first example of a group with intermediate growth \cite{Grigorchuk1983}, hence solving the Milnor Problem \cite{Carlitz1968}. 
Many other groups of automorphisms of rooted trees have since been defined and studied.
Prominent  examples are the Gupta-Sidki $p$-groups~\cite{Gupta1983}, for $p$ an odd prime, and the second Grigorchuk group~\cite{Grigorcuk1980}, which belong to the family of the so-called
\emph{Grigorchuk-Gupta-Sidki groups} (GGS-groups, for short).
This paper is devoted to the study of branch structures in GGS-groups over primary trees, extending results of Vovkivsky \cite{Vovkivsky2000}, Fern\'andez-Alcober and Zugadi-Reizabal \cite{Fernandez-Alcober2014}, and Fern\'andez-Alcober, Garrido, and Uria-Albizuri
\cite{Fernandez-Alcober2017}.
Before stating our main results we quickly introduce all relevant concepts.

Let $m\ge 2$ be an integer and let $X=\{x_1,\ldots,x_m\}$ be a set with $m$ elements.
We write $X^*$ for the free monoid generated by $X$.
The \emph{descendants} of a word $u\in X^*$ are the words $v=uz$ with
$z\in X^*$, and $v$ is an \emph{immediate descendant} of $u$ if $z\in X$.
The \emph{$m$-adic tree} $\T$ is the rooted tree whose vertices are the words in $X^*$ (the root being the empty word $\varnothing$), and where two vertices are connected by an edge if any of the two is an immediate descendant of the other.
The $\ell$-th level $\lev_{\ell}$ of $\T$ consists of all words of length $\ell$ in $X^*$.
If $m$ is a power of a prime $p$, we say that $\T$ is a \emph{primary tree}.

The automorphisms of $\T$ as a graph form a group $\Aut\T$ under composition.
Given $f\in \Aut\T$ and a vertex $u$ of $\T$, we can write $f(uz)=f(u)f_u(z)$ for all
$z\in X^*$,  where $f_u\in  \Aut\T$ is called the \emph{section of $f$ at $u$}.
For every $\ell\ge 1$, we let $\st(\ell)$ denote the pointwise stabilizer of $\lev_{\ell}$ in
$\Aut\T$.
Then we have an isomorphism
\[
\begin{matrix}
\psi_{\ell} & \colon & \st(\ell) &
\longrightarrow & \Aut\T \times \overset{m^{\ell}}{\cdots} \times \Aut\T
\\[5pt]
& & f & \longmapsto & (f_u)_{u\in\lev_{\ell}},
\end{matrix}
\]
where the entries of the tuple $(f_u)_{u\in\lev_{\ell}}$ are ordered according to the lexicographic order in $\lev_{\ell}$ derived from the ordering $x_1<\cdots<x_m$ of $X$. 
For simplicity, we write $\psi$ for $\psi_1$.
An automorphism $f$ of $\T$ is \emph{rooted} if it permutes rigidly the subtrees hanging from the vertices in the first level of $\T$.
In other words, if $x\in X$ and $z\in X^*$  then $f(xz)=\rho(x)z$ for some $\rho\in \sym(X)$.
We then say that $f$ is the rooted automorphism corresponding to the permutation $\rho$.

Now let $G$ be a subgroup of $\Aut\T$.
We say that $G$ is \emph{spherically transitive} if it acts transitively on every level $\lev_{\ell}$, and that $G$ is \emph{self-similar} if $g_u\in G$ for every $g\in G$ and $u\in X^*$.
The \emph{$\ell$th level stabilizer} of $G$ is $\st_G(\ell):=\st(\ell)\cap G$.
For a vertex $u$ of $\T$, we write $\st_G(u)$ for the \emph{stabilizer of $u$} in $G$, and 
$\rst_G(u)$ for the \emph{rigid stabilizer of $u$} in $G$, that is, the subgroup consisting of the automorphisms in $G$ that stabilize all vertices that are not descendants of $u$.
Note that $\rst_G(u)\le \st_G(u)$.
Then the \emph{$\ell$th rigid stabilizer} of $G$ is
\[
\rst_G(\ell) := \langle  \rst_G(u) \mid u\in  \lev_{\ell} \rangle
= \prod_{u\in\lev_{\ell}} \, \rst_G(u).
\]
If $G$ is spherically transitive, we say that $G$ is \emph{weakly branch} if
$\rst_G(\ell)\neq 1$ for all $\ell$, and that $G$ is \emph{branch} if $|G:\rst_G(\ell)|$ is finite for all~$\ell$.
On the other hand, if $G$ is spherically transitive and self-similar, and for some
$1\ne K\le \st_G(1)$ we have $K\times \overset{m}{\cdots} \times K\subseteq \psi(K)$, we say that
$G$ is \emph{weakly regular branch} over $K$.
If furthermore $|G:K|$ is finite, we say that $G$ is \emph{regular  branch} over $K$.
It is easy to see that (weakly) regular branch groups are also (weakly) branch.

We can now introduce GGS-groups. 
Given a non-zero tuple $\mathbf{e} =(e_1,\ldots,e_{m-1})$ in $(\Z/m\Z)^{m-1}$, the
\emph{GGS-group} $G$ corresponding to the \emph{defining vector}
$\mathbf{e}$ is the subgroup $G=\langle a,b\rangle$ of $\Aut\T$, where $a$ denotes  the  rooted  automorphism corresponding to the permutation $\sigma=(x_1,x_2,\ldots,x_m)$ and $b\in\st(1)$ is defined recursively by the condition
\[
\psi(b)=(a^{e_1},\ldots,a^{e_{m-1}},b).
\]
Note that $a$ is of order $m$ and that $b$ is of order $m/d$, where
$d=\gcd(e_1,\ldots,e_{m-1},m)$.
It is obvious that $\mathbf{e}$ and $\lambda\mathbf{e}$ define the same GGS-group if
$\lambda$ is invertible modulo $m$.

Throughout the paper, for a given prime $p$ and $n\in\N$, we let $G$ denote a GGS-group defined over the $p^n$-adic tree, having canonical generators $a$ and $b$, and defining vector
$\mathbf{e}=(e_1,\ldots,e_{p^n-1})$.
Our aim is to study whether $G$ is a (regular) weakly branch or branch group.
This problem was first addressed by Vovkivsky, who proved that the following three conditions are equivalent when $G$ is periodic (see \cite[Theorem 3]{Vovkivsky2000}):
\begin{enumerate}
\item[(a)]
There exists $i\in \{1,\ldots,p^n-1\}$ such that $e_i\not\equiv 0\bmod p$.
\item[(b)]
The group $G$ is regular branch over $G''$.
\item[(c)]
The group $G$ is branch.
\end{enumerate}
He also showed that $G$ is periodic if and only if
\begin{equation}
\label{eq: periodicity criterion}
S_i:=e_{p^i}+e_{2p^i}+\cdots+e_{p^n-p^i}\equiv 0
\hspace{-6pt}
\pmod{p^{i+1}},
\
\text{for every $i=0,\ldots,n-1$.}
\end{equation}
Thus the defining vectors for periodic GGS-groups can be obtained by arbitrarily choosing all
entries $e_j$ where $j$ is not a power of $p$, and then using the conditions in
(\ref{eq: periodicity criterion}) to determine $e_{p^i}$ modulo $p^{i+1}$ for every $i=0,\ldots,n-1$
(actually in reverse order of these values).
It follows that the proportion of vectors defining periodic GGS-groups is roughly
\[
\frac{1}{p} \cdot \frac{1}{p^2} \cdots \frac{1}{p^n} = \frac{1}{p^{n(n+1)/2}},
\]
and as a consequence, Vovkivsky's criterion does not apply to a majority of GGS-groups.
On the other hand, in the special case of the $p$-adic tree, Fern\'andez-Alcober and Zugadi-Reizabal \cite{Fernandez-Alcober2014} proved that all GGS-groups with non-constant defining vector are regular branch over either $G'$ or $\gamma_3(G)$.
Later on, Fern\'andez-Alcober, Garrido and Uria-Albizuri \cite{Fernandez-Alcober2017} showed that the group with constant defining vector is not branch, although it is weakly regular branch.
This completes the analysis of the branch properties of GGS-groups over the $p$-adic tree.
The goal of the present paper is to extend Vovkivsky's results and the results regarding the $p$-adic tree to non-periodic groups and to general primary trees, respectively.

Before stating our main theorem we need to introduce some notation.
Let
\[
\F(p^n)=(\Z/p^n\Z)^{p^n-1}\setminus(p\Z/p^n\Z)^{p^n-1}
\]
be the set of defining vectors that are not trivial modulo $p$.
Observe that Vovkivsky's criterion says that a periodic GGS-group over a primary tree is branch if and only if $\mathbf{e}\in\F(p^n)$.
We will prove (see Lemma \ref{lem: e in F}) that a GGS-group with $\mathbf{e}\not\in \F(p^n)$ is not spherically transitive, so it cannot be (weakly) branch.
Hence our study reduces to GGS-groups with defining vector in $\F(p^n)$. 
Given $\mathbf{e}\in\F(p^n)$, let
\begin{equation}
\label{def: Y}
Y(\mathbf{e}) := \{ 1\le i\le p^n-1 \mid e_i\not\equiv 0\bmod p \}
\end{equation}
be the set of indices having invertible entries in $\mathbf{e}$, and let
\begin{equation}
\label{def: t}
t(\mathbf{e}) := \max \{ s\in \Z \mid s\geq 0 \text{ and } p^s
\mid i \text{ for all } i\in Y(\mathbf{e}) \}.
\end{equation}
If there is no confusion about the vector $\mathbf{e}$ then we simply write $Y$ and $t$ for $Y(\mathbf{e})$ and $t(\mathbf{e})$, respectively.
Note that $t\in\{ 0,1,\ldots,n-1 \}$ and $Y\subseteq \{p^t,2p^t,\ldots,p^n-p^t\}$.
We say that $Y$ is \emph{maximal} if $Y=\{p^t,2p^t,\ldots,p^n-p^t\}$.

Now we define two special subsets of $\F(p^n)$.
On the one hand, let $\E(p^n)$ be the set of all tuples $\mathbf{e}$ that are constant modulo $p$ on the set $Y$, and such that $Y$ is maximal.
This is equivalent to the condition $e_{ip^t}\equiv e_{jp^t}\bmod p$ for $1\le i,j\le p^{n-t}-1$.
On the other hand, we define $\E'(2^n)=\{ \mathbf{e}\in\F(2^n) \mid t=n-1 \}$; in other words,
$\E'(2^n)$ consists of the vectors whose only odd entry is $e_{2^{n-1}}$.
Obviously, we have $\E'(2^n)\subseteq \E(2^n)$.
Then our first main theorem reads as follows.

\begin{thm}
\label{thm: main 1}
Let $G$ be a GGS-group over the $p^n$-adic tree with defining vector $\mathbf{e}\in\F(p^n)$.
The following hold:
\begin{enumerate}
\item
If $\mathbf{e}\not\in\E'(2^n)$ then $G$ is weakly regular branch over $G''$.
\vspace{3pt}
\item
If $\mathbf{e}\not\in\E(p^n)$, then $G$ is regular branch over $\gamma_3(G)$.
\end{enumerate}
\end{thm}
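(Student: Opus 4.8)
The plan is to verify the two defining conditions of (weak) regular branchness directly, after reducing each to the isolation of a single first-level coordinate. By the hypothesis $\mathbf{e}\in\F(p^n)$ and Lemma~\ref{lem: e in F}, the group $G$ is spherically transitive, and it is self-similar since every section of $a$ is trivial and every section of $b$ is a power of $a$ or $b$ itself. As $\gamma_3(G)$ and $G''$ both lie in $\st_G(1)$, what remains in each case is the diagonal embedding $K\times\overset{p^n}{\cdots}\times K\subseteq\psi(K)$, with $K=\gamma_3(G)$ or $K=G''$, together with finiteness of $|G:\gamma_3(G)|$ for part~(ii). Since $a$ cyclically permutes the $p^n$ coordinates, conjugation by powers of $a$ moves a section from one coordinate to any other; hence it suffices, in each case, to exhibit an element of $K$ whose sections are trivial off a single coordinate, where a chosen generator of $K$ is attained.

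First I would assemble the basic section formulas. Writing $c=[a,b]$, a direct computation gives
\[
\psi(c)=\bigl(b^{-1}a^{e_1},\,a^{e_2-e_1},\,\ldots,\,a^{e_{p^n-1}-e_{p^n-2}},\,a^{-e_{p^n-1}}b\bigr),
\]
and, using $b^a=b[b,a]$, one has $[b,b^a]=[b,[b,a]]\in\gamma_3(G)$ with
\[
\psi([b,b^a])=\bigl([a^{e_1},b],\,1,\ldots,1,\,[b,a^{e_{p^n-1}}]\bigr).
\]
Computing $\psi([b,b^{a^k}])$ for the various shifts $k$ shows that every such commutator lies in $\gamma_3(G)$ and has sections that are commutators $[a^{e_i},b]\in G'$, supported on at most two coordinates. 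Taking products of the $a$-conjugates of these elements, one tries to cancel all but one coordinate; the degree to which a single coordinate can be isolated is governed by a linear system over $\Z/p^n\Z$ whose coefficients are read off from the $e_i$. The invertibility of some $e_i$ provided by $\mathbf{e}\in\F(p^n)$ is what renders the system nondegenerate, and it is precisely the solvability of its leading part that the hypotheses on $\E(p^n)$ and $\E'(2^n)$ control.

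For part~(ii) I would isolate, in a single coordinate, a generator of $\gamma_3(G)$ (such as the image of $[[a,b],b]$) by a $\gamma_3(G)$-valued combination of the elements above, the lower-order corrections being absorbed recursively via the branch structure; shifting by $a$ then gives $\gamma_3(G)\times\overset{p^n}{\cdots}\times\gamma_3(G)\subseteq\psi(\gamma_3(G))$. The hypothesis $\mathbf{e}\notin\E(p^n)$ enters exactly here: when $Y$ is not maximal or $\mathbf{e}$ is non-constant modulo $p$ on $Y$, one of the differences $e_i-e_j$ attached to the supporting coordinates is a unit modulo $p$, and this is what lets a single $\gamma_3(G)$-generator be separated. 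For the index I would note that $G/\gamma_3(G)$ is nilpotent of class at most two and generated by the images of the finite-order elements $a$ and $b$, hence finite, so $|G:\gamma_3(G)|<\infty$.

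Part~(i) is not a formal consequence of part~(ii), since the preimages produced there lie in $\gamma_3(G)$ rather than in $G''$. When $\mathbf{e}\notin\E(p^n)$ I would leverage the isolation already obtained: having elements $g_x\in G'$ with $\psi(g_x)=(x,1,\ldots,1)$ for $x$ running over a generating set of $G'$ modulo $\gamma_3(G)$, the commutators $[g_x,g_y]\in G''$ satisfy $\psi([g_x,g_y])=([x,y],1,\ldots,1)$, and these $[x,y]$ generate $G''$; in particular $G''\neq1$. I expect the main obstacle to be the genuinely new range $\E(p^n)\setminus\E'(2^n)$, where part~(ii) fails: there $e_i\equiv e_j\bmod p$ for all $i,j\in Y$, so the first-order data driving the $G'$-isolation collapse and no single $G'$-value can be separated into one coordinate. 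The remedy is to descend one step further and isolate $G''$-generators directly, tracking sections modulo a deeper term of the lower central series, where the separation is controlled not by $e_i-e_j$ but by a finer, second-order invariant of $\mathbf{e}$. The most delicate point, and the source of the exception, is $p=2$: one must check that this second-order separation succeeds for every $\mathbf{e}\in\E(2^n)\setminus\E'(2^n)$ and degenerates exactly on $\E'(2^n)$, the vectors whose only odd entry is $e_{2^{n-1}}$, which is why that set is excluded from statement~(i).
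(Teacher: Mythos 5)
Your setup is fine (the reduction to isolating a single coordinate, the formulas for $\psi([a,b])$ and $\psi([b,b^a])$, and the finiteness of $|G:\gamma_3(G)|$ via torsion-generated nilpotency), but both steps that carry the real content of the theorem are missing. For part (ii): the components of $[b,b^{a^k}]$ and its conjugates lie in the \emph{nonabelian} group $G'$, so your ``linear system over $\Z/p^n\Z$'' only exists after passing to $G'/\gamma_3(G)$; solving it isolates a coordinate only modulo $\gamma_3(G)\times\cdots\times\gamma_3(G)$, and your plan to have these errors ``absorbed recursively via the branch structure'' is circular, since the branch structure over $\gamma_3(G)$ is precisely what is being proved. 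The paper's mechanism is different and is the key idea: normalizing $e_k=1$ with $k=p^t$ (Lemma~\ref{lem:reduced_definining_vector}), it builds explicit recursive sequences (see \eqref{eq: g_i} and \eqref{eq: g_i IS}) in which each cancellation step multiplies the exponent of the surviving error term by a quantity $q$ divisible by $p$; since $o(a)=o(b)=p^n$, the error vanishes identically after at most $n$ steps, giving exact isolation. Moreover, this requires a case analysis your outline does not anticipate: non-IS vectors (Theorem~\ref{thm:e_p^n-k}, where one even gets regular branch over $G'$), IS vectors with $Y$ non-maximal (Theorem~\ref{thm:IS_with_zeros}), and $Y$ maximal, where the governing invariants are the determinants $\delta_m=e_{m-k}e_{m+k}-e_m^2$ and, when all of these vanish, the condition $e_{2k}\not\equiv 1\bmod p$ --- not simply ``some difference $e_i-e_j$ is a unit''.

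Part (i) has the more serious gap. The elements $g_x\in G'$ with $\psi(g_x)=(x,1,\ldots,1)$, for $x$ running over a generating set of $G'$ modulo $\gamma_3(G)$, are not supplied by part (ii): part (ii) isolates only elements of $\gamma_3(G)$, with preimages in $\gamma_3(\st_G(1))$, so nothing gives you $x\notin\gamma_3(G)$. Such $g_x$ are available exactly in the situation where $G$ is regular branch over $G'$, which the paper establishes only for non-IS vectors; for IS vectors your construction never gets started. (Even granted the $g_x$, commutators of a set generating $G'$ only \emph{modulo} $\gamma_3(G)$ need not normally generate $G''$.) What you defer as a hoped-for ``second-order invariant'' argument is exactly where the paper's decisive trick lives, and that trick handles \emph{all} IS vectors uniformly, not just those in $\E(p^n)$: one takes $[[b,b_k]^{h_1},[b_k,b_{2k}]^{h_2}]$ with $h_1,h_2\in\st_G(1)$ arbitrary (fractality, Lemma~\ref{lem: e in F}). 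The two factors are supported on positions $\{k,p^n\}$ and $\{k,2k\}$ respectively, so provided $2k\neq p^n$ --- which is exactly the hypothesis $\mathbf{e}\notin\E'(2^n)$ --- their commutator is supported on the single position $k$, where it equals $[[a,b]^{g_1},[b,a^{e_{p^n-k}}]^{g_2}]$. Since $G$ is IS, $e_{p^n-k}$ is invertible, these elements normally generate $G''$, and Lemma~\ref{lem: main tool for branchness} concludes. Without this overlap-of-supports device, or a genuine substitute for it, your outline does not prove statement (i), and it also misreads where the exceptional set $\E'(2^n)$ enters: it is the degeneration $2k=p^n$ of the support intersection, not a degeneration of any linear or ``second-order'' system in the $e_i$.
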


Observe that (i) of Theorem~\ref{thm: main 1} extends Vovkivsky's result to practically all GGS-groups; in fact, to all GGS-groups if $p$ is odd.
On the other hand, part (ii) applies to most defining vectors in $\F(p^n)$, since
\[
|\F(p^n)| = (p^{\,p^n-1}-1) p^{(n-1)(p^n-1)}
\]
and
\[
|\E(p^n)| =n(p-1)p^{(n-1)(p^n-1)}.
\]
It also applies to periodic GGS-groups (see Corollary~\ref{cor:extend vovkivsky}), and consequently it improves Vovkivsky's result by showing that periodic GGS-groups are regular branch over a larger subgroup than $G''$.
Finally, we want to remark that in some of the cases in (ii) we actually get the better result that $G$ is regular branch over $G'$.

The question of which GGS-groups with $\mathbf{e}\in\E'(2^n)$ are weakly branch is still open.
Observe that for $n=1$ we simply have $\E'(2)=\F(2)=\{(e_1)\mid \text{$e_1$ is odd}\}$.
These vectors define just one GGS-group, which is isomorphic to the infinite dihedral group and is not weakly branch.

On the other hand, it is far from clear when a GGS-group with defining vector in $\E(p^n)$ is 
branch or simply weakly branch.
For some special defining vectors, we can show that the corresponding groups are also regular branch over $\gamma_3(G)$ (see Theorem~\ref{thm:partially constant}).
However, if the defining vector is constant, we have the following result.

\begin{thm}
\label{thm: main 2}
Let $G$ be a GGS-group over the $p^n$-adic tree with constant defining vector.
Then $G$ is not a branch group.
\end{thm}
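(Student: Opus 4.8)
The plan is to prove that the first-level rigid stabilizer $\rst_G(1)$ has infinite index in $G$; this already prevents $G$ from being branch, since branchness requires $\abs{G:\rst_G(\ell)}$ to be finite for every $\ell$. First I would dispose of the degenerate cases. If the constant value of the defining vector is divisible by $p$, then $\mathbf{e}\notin\F(p^n)$, so by Lemma~\ref{lem: e in F} the group is not even spherically transitive and we are done; otherwise the common value is a unit modulo $p^n$ and, after rescaling $\mathbf{e}$, I may assume $\mathbf{e}=(1,\ldots,1)$, so that $\psi(b)=(a,\ldots,a,b)$ and $b$ has order $p^n$. The case $(p,n)=(2,1)$ yields $G\cong D_\infty$, which is known not to be weakly branch and is handled separately; in all remaining cases $\mathbf{e}\notin\E'(2^n)$, and Theorem~\ref{thm: main 1}(i) tells us that $G$ is weakly regular branch over $G''$.

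Next I would reduce the index computation to a statement about sections. Writing a typical $g\in\st_G(1)$ as $\psi(g)=(g_1,\ldots,g_{p^n})$, set $R:=\{h\in G\mid (1,\ldots,1,h)\in\psi(\st_G(1))\}$, the group of last-coordinate sections of the elements of $\rst_G(x_{p^n})$. Projecting $\psi(\st_G(1))$ onto its last coordinate is surjective onto $G$ (the sections of a GGS-group realise all of $G$, since the last coordinates of the generators $b^{a^i}$ are $a$ and $b$), and this projection carries $\psi(\rst_G(1))$ onto $R$; hence $\abs{G:R}\le \abs{\st_G(1):\rst_G(1)}\le \abs{G:\rst_G(1)}$. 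Therefore it suffices to prove that $R$ has infinite index in $G$.

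The heart of the argument is the identification $R=G''$. The inclusion $G''\subseteq R$ is exactly the weakly regular branch property over $G''$ supplied by Theorem~\ref{thm: main 1}(i). For the reverse inclusion I would use that $\st_G(1)$ is generated by the conjugates $b^{a^i}$ with $0\le i\le p^n-1$, whose images $\psi(b^{a^i})$ are the cyclic shifts of $(a,\ldots,a,b)$, and work modulo $G''$, where $G/G''$ is metabelian and the constraints become linear over the abelian group $G'/G''$. Because the defining vector is constant, every non-$b$ entry of each generator is the \emph{same} power of $a$, so the requirement that the first $p^n-1$ coordinates of a product of these generators be trivial imposes highly symmetric cancellations that force the surviving last coordinate into $G''$. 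This is precisely where constancy is essential: for a non-constant vector in $\F(p^n)$ the analogous computation only drives the section into $\gamma_3(G)$ or $G'$, which is why those groups are branch by Theorem~\ref{thm: main 1}(ii). I expect this reverse inclusion $R\subseteq G''$ to be the main obstacle, since it demands careful bookkeeping of the relations in the metabelian quotient $G/G''$.

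Finally I would establish that $\abs{G:G''}=\infty$, so that $\abs{G:\rst_G(1)}\ge\abs{G:R}=\abs{G:G''}=\infty$ and $G$ fails to be branch. For this it is enough to show that $G/G''$ is infinite, equivalently that $G'/G''$ is infinite; I would obtain this from the lamplighter-type structure forced by the recursion $\psi(b)=(a,\ldots,a,b)$, which produces commutators at arbitrarily deep levels that stay independent modulo $G''$. Combining these facts completes the proof.
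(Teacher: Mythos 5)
Your opening reduction is fine: the last-coordinate projection $\pi\colon\st_G(1)\to G$ is a surjective homomorphism (surjectivity is fractalness, Lemma~\ref{lem: e in F}) carrying $\rst_G(1)$ onto $R$, so $\abs{G:R}\le\abs{G:\rst_G(1)}$ and it suffices to prove $\abs{G:R}=\infty$. This is parallel in spirit to the paper, which works with $\rst_{G'}(1)$ instead. The fatal problem is the step you yourself flag as the main obstacle: the identification $R=G''$ is \emph{false}, so no amount of bookkeeping in the metabelian quotient $G/G''$ can establish it. Let $K=\langle ba^{-1}\rangle^G$. The paper proves (Lemma~\ref{lem: gen properties of GGS constant}) that $G$ is weakly regular branch over $K'$, i.e.\ $K'\times\cdots\times K'\subseteq\psi(K')$; in particular $K'\subseteq R$. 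But $K'\not\subseteq G''$. Concretely, take $g=[[b,a],ba^{-1}]$. Since $[b,a]\in G'\le K$ and $ba^{-1}\in K$, we have $g\in K'\subseteq R$. On the other hand, each first-level section map $\st_G(1)\to G$ is a homomorphism, so every first-level section of an element of $G''=[G',G']$ lies in $G'$. Now $\psi([b,a])=(a^{-1}b,1,\ldots,1,b^{-1}a)$ has its nontrivial entries (modulo $G'$) at positions $1$ and $p^n$; conjugating by $ba^{-1}$ cyclically shifts the entries by one position and moves each entry only within its $G'$-coset, so $\psi(g)=\psi([b,a])^{-1}\cdot\psi([b,a]^{ba^{-1}})$ has some coordinate congruent to $a^{-1}b$ or $b^{-1}a$ modulo $G'$ (for $p^n>2$ the shifted nontrivial positions cannot all cancel against the original ones). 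Since $a^{-1}bG'\neq G'$ in $G/G'\cong C_{p^n}\times C_{p^n}$ (Theorem~\ref{thm:abelianisation}), we get $g\notin G''$. Hence $R\supseteq K'\supsetneq G''$. The correct identification — which is in essence what the paper proves via Proposition~\ref{pro:prod of g_i} — is $R=K'$: any $g\in\st_G(1)$ with $\psi(g)=(1,\ldots,1,h)$ is forced into $\st_G(1)'\le G'$ by the abelianization constraints, and then $h=\prod_i g_i\in K'$.

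This false identification also sinks your final step. Since $G''$ is a \emph{proper} subgroup of $R$, proving $\abs{G:G''}=\infty$ gives nothing: you need $\abs{G:R}=\abs{G:K'}=\infty$. Establishing that $G/K'$ is infinite is exactly the hard content of the paper: it requires $\st_{\G}(1)'=\st_{\G}(2)$ (Proposition~\ref{pro:stG(1)'=stG(2)}), the product formulas of Proposition~\ref{pro:prod of g_i}, the computation that the quotients $Q_{\ell}=\G/K'\st_{\G}(\ell)$ have order exactly $p^{(\ell+1)n}$ and class $\ell$ (Lemma~\ref{lem:G/Kstab l}), and finally the isomorphism $\G/K'\cong\langle x\rangle\ltimes\Z^{p^n-1}$ (Theorem~\ref{thm:structure G/K'}). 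Your proposal offers no substitute for this chain: the appeal to a ``lamplighter-type structure'' producing commutators independent modulo $G''$ is a hope rather than an argument, and in any case it is aimed at the wrong subgroup. So there are two genuine gaps: the central claim $R=G''$ is wrong (one inclusion fails), and the infinitude of the relevant quotient $G/K'$ is never addressed.
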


Finally, observe that, in the case of the $p$-adic tree, $\F(p)$ consists of all non-zero vectors and
$\E(p)$ reduces to the constant non-zero vectors.
Consequently, Theorems~\ref{thm: main 1} and \ref{thm: main 2} generalize the above-mentioned results of Fern\'andez-Alcober and Zugadi-Reizabal \cite{Fernandez-Alcober2014}, and of
Fern\'andez-Alcober, Garrido, and Uria-Albizuri \cite{Fernandez-Alcober2017}, respectively.

\medskip

\noindent
\textit{Notation.}
If $\sigma$ and $\tau$ are two permutations of a set, we write their composition (where we apply first $\sigma$ and then $\tau$) as $\sigma\tau$, by juxtaposition, rather than $\tau\circ\sigma$.
This applies in particular to the composition in $\Aut\T$.
On the other hand, when we write dots in a tuple in between two entries equal to $1$, like in
$(1,\ldots,1,g)$, that means that all entries represented by the dots are also equal to $1$.
However, if the dots are not between two entries equal to $1$, like in
$(g,\dots,1)$, then they represent unspecified elements.

\medskip

\noindent
\textbf{Acknowledgement.}
The first author thanks A.\ Caranti, O.\ Garaialde Oca\~na and J.\ Gonz\'alez S\'anchez  for helpful discussions.

\medskip

\noindent
\textbf{Data availability statement.}
Data sharing not applicable to this article as no datasets were generated or analysed during the current study.

\section{Regular branch GGS-groups}
\label{sec:reg branch}

Recall that $G$ always denotes a GGS-group over the $p^n$-adic tree, having canonical generators $a$ and $b$, and defining vector $\mathbf{e}=(e_1,\ldots,e_{p^n-1})$.
Following the notation in \cite{Vovkivsky2000}, we let $p^{R_0}$ be the highest power of $p$ dividing all entries of $\mathbf{e}$.
Hence $o(b)=p^{n-R_0}$ and the set $\F(p^n)$ consists of all defining vectors for which $R_0=0$.
Also we set $b_i:=b^{a^i}$ for every integer $i$.
Then
\begin{equation}
\label{eqn:stG(1)}
\begin{split}
\psi(b_0) &= (a^{e_1},\ldots,a^{e_{p^n-1}},b),
\\
\psi(b_1) &= (b,a^{e_1},\ldots,a^{e_{p^n-1}}),
\\
&\,\,\,\vdots
\\
\psi(b_{p^n-1}) &= (a^{e_2},\ldots, a^{e_{p^n-1}},b,a^{e_1}),
\end{split}
\end{equation}
and $b_i=b_j$ if $i\equiv j\pmod{p^n}$.
Since $G/\langle b \rangle^G$ is generated by the image of $a$, and $a^{p^i}\in\st_G(1)$ if and only if $i\ge n$, it readily follows that
\begin{equation}
\label{eqn:stG(1)}
\st_G(1) = \langle b \rangle^G = \langle b_0, b_1, \ldots, b_{p^n-1} \rangle
\end{equation}
has index $p^n$ in $G$.
Similarly, we have $\st_G(1)=\st_G(x_i)$ for every $x_i\in X$.

Our first theorem gives the structure of the abelianization of $G$.
This can be accomplished by using a result of Rozhkov \cite[Proposition 1]{rozhkov1986theory}, and the proof follows exactly the same strategy as in Propositions 4.2 and 4.3 of
\cite{alexoudas2016maximal}, where Alexoudas, Klopsch, and Thillaisundaram determine the abelianization of multi-edge spinal groups.
For this reason, we omit the details of the proof of the theorem, and refer the reader to the latter paper.
Strictly speaking, Rozhkov's result applies to the so-called Aleshin type groups (AT-groups), which are  spherically transitive by definition.
GGS-groups over a primary tree are not necessarily spherically transitive (see Lemma \ref{lem: e in F} below); however, a careful analysis of the proof of Proposition 1 of \cite{rozhkov1986theory} shows that this transitivity is not actually needed, and the result also applies in our setting.
We thus have the following theorem.

\begin{theorem}
\label{thm:abelianisation}
Let $G$ be a GGS-group over the $p^n$-adic tree.
Then
$G/G'=\langle aG' \rangle \times \langle bG' \rangle \cong C_{p^n}\times C_{p^{n-R_0}}$.
\end{theorem}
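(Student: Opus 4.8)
The plan is to show that $G/G'$, which is abelian and generated by the two cosets $aG'$ and $bG'$, is the \emph{internal} direct product of the cyclic subgroups these elements generate, and then to identify the orders of the two factors. Since $a$ has order $p^n$ and $b$ has order $p^{n-R_0}$, the orders of $aG'$ and $bG'$ automatically divide $p^n$ and $p^{n-R_0}$, respectively; the content of the theorem is that these bounds are attained and that the two cyclic subgroups intersect trivially. I would separate the routine half of the argument, concerning $aG'$, from the genuinely delicate half, concerning the order of $bG'$.

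For the $aG'$-direction I would use the action on the first level. Sending $g\in G$ to the power of $\sigma$ that it induces on $\lev_1$ defines a homomorphism $\alpha\colon G\to C_{p^n}$ with $\alpha(a)$ a generator and $\alpha(b)=1$, because $a$ is rooted while $b\in\st_G(1)$. As $C_{p^n}$ is abelian, $\alpha$ factors through an epimorphism $\bar\alpha\colon G/G'\to C_{p^n}$ with $\bar\alpha(aG')$ a generator and $\bar\alpha(bG')=1$. Hence $aG'$ has order exactly $p^n$, and $\bar\alpha$ restricts to an isomorphism on $\langle aG'\rangle$, so that $\langle aG'\rangle\cap\ker\bar\alpha=1$. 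Since $\langle bG'\rangle\le\ker\bar\alpha$, this forces $\langle aG'\rangle\cap\langle bG'\rangle=1$, and because $G/G'=\langle aG'\rangle\langle bG'\rangle$ we obtain the internal direct product $G/G'=\langle aG'\rangle\times\langle bG'\rangle$ with first factor isomorphic to $C_{p^n}$. It then only remains to prove that $bG'$ has order exactly $p^{n-R_0}$.

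This last point is where I expect all the real difficulty to lie. To establish it I would try to exhibit an epimorphism $\delta\colon G\to C_{p^{n-R_0}}$ for which $\delta(b)$ has order $p^{n-R_0}$, since then $o(bG')\ge o(\delta(b))=p^{n-R_0}$ would close the estimate. The only natural candidate for $\delta$ is the \emph{total $b$-exponent modulo $p^{n-R_0}$}, and the real obstacle is precisely its well-definedness, that is, that no relation of $G$ collapses $b$ to a smaller order in the abelianization. A useful tool here is the section-sum map $\theta\colon\st_G(1)\to G/G'$ defined by $\theta(g)=\prod_{u\in\lev_1} g_u\,G'$, which is a homomorphism because $G/G'$ is abelian (it is $\psi$ followed by coordinatewise projection onto $G/G'$ and multiplication in that abelian group). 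From the recursion describing the sections of the $b_i$ one computes $\theta(b_i)=(aG')^{E}\,bG'$, with $E=e_1+\cdots+e_{p^n-1}$, for every $i$, independently of the cyclic shift. Propagating such identities down the successive levels of the tree, while tracking the $p$-adic valuations governed by $R_0$, is what should force $bG'$ not to collapse below order $p^{n-R_0}$.

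Making this descent rigorous is the technical heart of the statement, and it is exactly here that I would invoke Rozhkov's computation \cite{rozhkov1986theory}, equivalently running the argument of Propositions~4.2 and 4.3 of \cite{alexoudas2016maximal}, rather than reconstructing it by hand. In summary, the splitting $G/G'=\langle aG'\rangle\times\langle bG'\rangle$ and the order of $aG'$ follow cheaply from the level-one action via $\alpha$, whereas the determination of the order of $bG'$ as $p^{n-R_0}$ is the substantive step, and it is this non-collapsing (well-definedness) phenomenon that I expect to be the main obstacle.
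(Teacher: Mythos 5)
Your proposal is correct and in substance matches the paper, which omits the proof entirely and defers to Rozhkov's Proposition~1 and the strategy of Propositions~4.2--4.3 of Alexoudas--Klopsch--Thillaisundaram: your treatment of the one genuinely hard point, namely that $bG'$ has order exactly $p^{n-R_0}$, defers to precisely those same references. Your elementary level-one-action argument giving the splitting $G/G'=\langle aG'\rangle\times\langle bG'\rangle$ and $o(aG')=p^n$ is correct (and, usefully, needs no spherical transitivity, which is the subtlety the paper must address when invoking Rozhkov), so it cleanly isolates the non-collapsing of $bG'$ as the only step requiring the cited computation.
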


Note also that all terms of the lower central series have finite index in $G$, since $G$ can be generated by two elements of order $p$.

It readily follows from the definition that all GGS-groups are self-similar.
Consequently we can consider the group homomorphism $\psi_u:\st_G(u)\rightarrow G$ given by $g\mapsto g_u$.
Recall that a subgroup $G$ of $\Aut\T$ is said to be \emph{fractal} if it is self-similar and
$\psi_u$ is onto.
We have the following result.

\begin{lem}
\label{lem: e in F}
Let $G$ be a GGS-group over the $p^n$-adic tree with defining vector $\mathbf{e}$.
Then the following conditions are equivalent:
\begin{enumerate}
\item
$G$ is spherically transitive.
\item
$G$ is fractal.
\item
$\mathbf{e}\in\F(p^n)$.
\end{enumerate}
\end{lem}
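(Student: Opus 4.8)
The plan is to prove the cycle of implications (iii) $\Rightarrow$ (ii) $\Rightarrow$ (i) $\Rightarrow$ (iii). Since every GGS-group is already known to be self-similar, showing that $G$ is fractal reduces to proving that each section map $\psi_u\colon\st_G(u)\to G$ is surjective, and the whole argument is driven by surjectivity at the first level, which is exactly where the hypothesis $\mathbf{e}\in\F(p^n)$ enters. Throughout I would use the description $\st_G(1)=\langle b_0,\dots,b_{p^n-1}\rangle$ together with the recursion formulas for $\psi(b_0),\dots,\psi(b_{p^n-1})$ recorded above, and the fact that $\st_G(x_i)=\st_G(1)$ for every $i$.

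For (iii) $\Rightarrow$ (ii), I would first read off from those formulas the projections of the generators of $\st_G(1)$ onto the last coordinate, namely $b,\,a^{e_{p^n-1}},\,a^{e_{p^n-2}},\dots,a^{e_1}$. Because $\mathbf{e}\in\F(p^n)$, some $e_j$ is invertible modulo $p^n$, so $a^{e_j}$ generates $\langle a\rangle$; hence the projection of $\st_G(1)$ onto the $x_{p^n}$-coordinate contains both $a$ and $b$ and is therefore all of $G$. To pass from one coordinate to all of them I would use that $\st_G(1)\trianglelefteq G$ is normalized by $a$ and that conjugation by $a$ cyclically shifts sections, $(g^a)_{x_i}=g_{x_{i+1}}$ with indices read modulo $p^n$; this forces all first-level coordinate projections of $\st_G(1)$ to coincide, hence all to equal $G$. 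Finally I would promote this first-level surjectivity to genuine fractality by the standard induction on the length of $u$: writing $u=x_iu'$, one realizes a prescribed section $h\in G$ at $u'$ by the inductive hypothesis and then pulls it back through $\psi_{x_i}(\st_G(x_i))=G$, obtaining an element of $\st_G(u)$ whose section at $u$ is $h$.

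For (ii) $\Rightarrow$ (i), transitivity on $\lev_1$ is immediate because $a$ acts as the full $p^n$-cycle $\sigma$. I would then induct on the level: given two vertices of $\lev_{\ell+1}$, I would move them into a common first-level subtree, say below $x_i$, using $\langle a\rangle$, and then combine $\psi_{x_i}(\st_G(x_i))=G$ with transitivity of $G$ on $\lev_\ell$ (the induction hypothesis) to produce an element of $\st_G(x_i)$ carrying one vertex to the other.

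For (i) $\Rightarrow$ (iii) I would argue the contrapositive: if $\mathbf{e}\notin\F(p^n)$, so every $e_i\equiv 0\pmod p$, then $G$ fails to be transitive already on $\lev_2$. The key observation is that the function $\lev_2\to\Z/p\Z$ sending $x_ix_j$ to $j\bmod p$ is constant on $G$-orbits: the generator $a$ leaves each second index unchanged, while $b$ sends $x_ix_j$ to $x_ix_{j+e_i}$, altering $j$ only by a multiple of $p$. As this function is non-constant on $\lev_2$, the group $G$ cannot act transitively there and so is not spherically transitive. I expect the bulk of the care to lie in the first implication, specifically in checking that surjectivity of a single first-level section map really does propagate, through the $a$-symmetry and the replication induction, to full fractality; the remaining two implications are then short, the reverse one resting on the simple level-two invariant above.
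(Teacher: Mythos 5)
Your proof is correct, and its two decisive computations are exactly the ones the paper relies on: reading off $b$ and the elements $a^{e_j}$ as last-coordinate sections of the generators $b_0,\dots,b_{p^n-1}$ of $\st_G(1)$ (so that one invertible entry $e_j$ already yields a surjective section map), and the mod-$p$ obstruction on the second level when every $e_i$ is divisible by $p$. The organization, however, is genuinely different. The paper never proves the propagation step itself: after checking transitivity on $\lev_1$ and surjectivity of a single first-level section map, it invokes \cite[Lemma~2.7]{Uria2016} to conclude at once that $G$ is spherically transitive and fractal, whereas you reprove that reduction from scratch (the $a$-conjugation shift of coordinates, the induction on the length of $u$ for fractality, and the induction on levels for transitivity). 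Moreover, your cyclic scheme (iii) $\Rightarrow$ (ii) $\Rightarrow$ (i) $\Rightarrow$ (iii) lets you get ``not (iii) implies not fractal'' for free, while the paper proves that implication directly, by noting that when $p$ divides every $e_i$ all first-level sections of elements of $G$ are words in $\{a^p,b\}$ and hence lie in a proper subgroup. What your route buys is self-containedness (no external lemma) at the cost of length; what the paper's buys is brevity, plus a slightly more explicit negative statement (a concrete proper subgroup trapping every section). One immaterial quibble: with the paper's composition convention one gets $(g^a)_{x_i}=g_{x_{i-1}}$ rather than $g_{x_{i+1}}$, but the cyclic-shift argument is unaffected.
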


\begin{proof}
We prove that $\mathbf{e}\in\F(p^n)$ implies that $G$ is spherically transitive and 
fractal, and that $\mathbf{e}\not\in\F(p^n)$ implies that $G$ is neither spherically transitive nor fractal.

We first assume that $\mathbf{e}\in\F(p^n)$.
By \cite[Lemma 2.7]{Uria2016}, in order to prove that $G$ is spherically transitive and fractal, it suffices to see that $G$ acts transitively on the vertices of the first level of $\T$ and that
$\psi_{x_i}(\st_G(x_i))=G$ for some $x_i\in X$.
The former is obvious, since $a\in G$, and for the latter observe that since $\mathbf{e}\in\F(p^n)$ we have $p\nmid e_i$ for some $i$ and then $\psi_{x_p}(b_{-i})=a^{e_i}$ and $\psi_{x_p}(b)=b$ generate $G$.

Now let $\mathbf{e}\not\in\F(p^n)$.
Since $p$ divides all components of $\mathbf{e}$, for every $x_i\in X$ and $g\in\st_G(x_i)$ the section $g_{x_i}$ is a word in $\{a^p,b\}$.
Thus $G$ is not fractal.
Now assume for a contradiction that $G$ is spherically transitive.
Then there exists $g\in G$ such that $g(x_1x_1)=x_1x_2$.
However, by the above we have $g(x_1x_1)=g(x_1)g_{x_1}(x_1)=x_1x_j$ for some
$j\equiv 1\pmod p$, which is a contradiction.
\end{proof}

As a consequence, since (weakly) branch groups are spherically transitive by definition, in the remainder we will always assume that $\mathbf{e}\in\F(p^n)$, unless otherwise stated.
Then $R_0=0$ and both $a$ and $b$ have order $p^n$.

The next lemma is one of the main tools for finding a branch structure in a GGS-group and it  can be proved as in \cite[Proposition~2.18]{Fernandez-Alcober2014}.

\begin{lem}
\label{lem: main tool for branchness}
Let $G$ be a spherically transitive fractal subgroup of $\Aut\T$, where $\T$ is a regular rooted tree, and let $L$ and $N$ be two normal subgroups of $G$.
Suppose that $L=\langle S \rangle^G$ and that $(1,\ldots,1,s,1,\ldots,1)\in\psi(N)$ for every $s\in S$, where $s$ appears always at the same position in the tuple.
Then $L\times \cdots \times L \subseteq \psi(N)$.
\end{lem}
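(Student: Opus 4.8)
The plan is to reduce the statement to a \emph{single-slot} version and then assemble the full product. Write $m$ for the degree of $\T$, so that $\psi\colon\st(1)\to\Aut\T\times\cdots\times\Aut\T$ with $m$ factors, and read $\psi(N)$ as $\psi(N\cap\st_G(1))$, which is a subgroup of the product because it is the $\psi$-image of a group. Say the distinguished slot is the $k_0$-th one, and for each $s\in S$ fix $n_s\in N\cap\st_G(1)$ with $\psi(n_s)=(1,\dots,1,s,1,\dots,1)$, where $s$ sits in position $k_0$. Since $\psi(N)$ is a group and every tuple $(\ell_1,\dots,\ell_m)\in L\times\cdots\times L$ is the product of the $m$ tuples that agree with it in one coordinate and are trivial elsewhere, it suffices to prove that for every position $j$ and every $\ell\in L$ the single-slot tuple carrying $\ell$ in position $j$ lies in $\psi(N)$.

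The key computation is how conjugation acts on sections. Using the composition convention of the paper one has $(fg)_u=f_u\,g_{f(u)}$, and from this, for $n\in\st_G(1)$ and $f\in G$ whose first-level permutation is $\pi$, the $j$-th entry of $\psi(n^f)$ equals $\bigl(n_{x_{\pi^{-1}(j)}}\bigr)^{f_{x_{\pi^{-1}(j)}}}$. Since $N$ and $\st_G(1)$ are both normal in $G$, we get $n_s^f\in N\cap\st_G(1)$, hence $\psi(n_s^f)\in\psi(N)$; and because every entry of $\psi(n_s)$ other than the $k_0$-th is trivial, the tuple $\psi(n_s^f)$ is again single-slotted, with nontrivial entry $s^{f_{x_{k_0}}}$ placed in position $\pi(k_0)$.

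I would then run two steps. First, conjugating by $c\in\st_G(x_{k_0})$ keeps the nontrivial entry in slot $k_0$ (as $c$ fixes $x_{k_0}$, its permutation fixes $k_0$) while replacing $s$ by $s^{c_{x_{k_0}}}$; by fractality $\psi_{x_{k_0}}(\st_G(x_{k_0}))=G$, so $c_{x_{k_0}}$ ranges over all of $G$. Thus every tuple $(1,\dots,1,s^{g},1,\dots,1)$ with $s\in S$, $g\in G$ and nontrivial entry in slot $k_0$ lies in $\psi(N)$; multiplying these inside the group $\psi(N)$ and using that $\ell\mapsto(1,\dots,1,\ell,1,\dots,1)$ is a homomorphism, I obtain all single-slot tuples with arbitrary $\ell\in\langle s^{g}\mid s\in S,\,g\in G\rangle=\langle S\rangle^{G}=L$ in slot $k_0$. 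Second, spherical transitivity furnishes $a_0\in G$ with $a_0(x_{k_0})=x_j$; conjugating a slot-$k_0$ tuple by $a_0$ relocates it to slot $j$ and conjugates its entry by $(a_0)_{x_{k_0}}\in G$. As $L\trianglelefteq G$, this conjugation is an automorphism of $L$, so I recover all of $L$ in slot $j$. Assembling over the $m$ slots through the group structure of $\psi(N)$ yields $L\times\cdots\times L\subseteq\psi(N)$.

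The only genuinely delicate point is the section-conjugation formula together with the observation that the single-slot shape is preserved — precisely because the off-slot entries are trivial, the permutation introduced by an $f\notin\st_G(1)$ merely relocates the nontrivial entry rather than smearing it across coordinates. Everything else is bookkeeping with the normality of $L$ and $N$, the homomorphism property of the slot embedding, and the closure of $\psi(N)$ under products.
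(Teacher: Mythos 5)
Your proof is correct and follows essentially the same route as the paper's, whose proof is by reference to \cite[Proposition~2.18]{Fernandez-Alcober2014}: reduce to single-slot tuples, use normality of $N\cap\st_G(1)$ together with fractality to place every conjugate $s^g$ (hence all of $L$) in the distinguished slot, relocate slots via spherical transitivity and self-similarity, and multiply inside the group $\psi(N)$. The section-conjugation formula and the observation that conjugation preserves the single-slot shape are precisely the ingredients of that standard argument, so there is nothing to add.
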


The next lemma generalizes \cite[Theorem~2.16]{Fernandez-Alcober2014}.
It shows that if a component $e_k$ of $\mathbf{e}$ is invertible modulo $p$ then there exists another GGS-group that is conjugate to $G$ in $\Aut\T$, and whose defining vector has the $p^s$-th component equal to $1$, where $p^s$ is the highest power
of $p$ dividing $k$.

\begin{lem}
\label{lem:reduced_definining_vector}
Let  $G$ be a GGS-group over the $p^n$-adic tree with defining vector
$\mathbf{e}\in \F(p^n)$, and assume that $e_k\not  \equiv  0 \bmod  p$.
If $p^s$ is the highest power of $p$ dividing $k$ then there exist $\alpha\in\sym(p^n-1)$ and $f\in\Aut\T$ such that:
\begin{enumerate}
\item
$\alpha(p^s)=k$.
\item
$\alpha(p^n-i)=p^n-\alpha(i)$ for all $i=1,\ldots,p^{n}-1$.
\item
$G^f$ is the GGS-group with defining vector
$\mathbf{e'}=e_k^{-1}\, (e_{\alpha(1)},\ldots,e_{\alpha(p^n-1)})$.
In particular, $e'_{p^s}=1$.
\end{enumerate}
\end{lem}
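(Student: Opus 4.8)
The plan is to realize both the index permutation $\alpha$ and the conjugating automorphism $f$ by a single \emph{homogeneous} tree automorphism coming from multiplication by a unit of $\Z/p^n\Z$, and then to absorb the scalar $e_k^{-1}$ into the fact that proportional defining vectors determine the same GGS-group. In particular, no ``base change'' is needed: the apparent extra scalar is harmless.

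First I would fix the combinatorics. Writing $k=p^s k'$ with $p\nmid k'$, I choose a unit $c\in(\Z/p^n\Z)^{\times}$ with $c\,p^s\equiv k\pmod{p^n}$; such a $c$ exists because this condition is equivalent to $c\equiv k'\pmod{p^{n-s}}$, and $k'$ is invertible modulo $p^{n-s}$, so any unit lift of $k'$ works. I then let $\alpha$ be multiplication by $c$ on the index set $\{1,\dots,p^n-1\}$, reduced modulo $p^n$. Since $c$ is a unit, $\alpha$ is a permutation fixing $p^n\equiv 0$, and one checks at once that $\alpha(p^s)=c\,p^s\equiv k$, giving (i), while $\alpha(p^n-i)=c(-i)\equiv -\alpha(i)\equiv p^n-\alpha(i)$, giving (ii).

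Next I would define $f$ as the homogeneous automorphism attached to $\pi:=$ multiplication by $c^{-1}$, that is, $f(x_{i_1}x_{i_2}\cdots x_{i_\ell})=x_{c^{-1}i_1}x_{c^{-1}i_2}\cdots x_{c^{-1}i_\ell}$ for every word (indices modulo $p^n$). This $f$ has top permutation $\pi$, all first-level sections equal to $f$, and $\pi$ fixes $x_{p^n}$. Using the composition convention of the paper, the conjugate of the rooted cycle $\sigma$ by $\pi$ has top permutation $\pi^{-1}\sigma\pi=\sigma^{c^{-1}}$, so $a^f=a^{c^{-1}}$ (the sections cancel because $f$ is homogeneous). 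For $b$ I would use that conjugation by $f$ sends the section of $b$ at $x_j$ to $(\beta_{\pi^{-1}(j)})^{f}=(\beta_{cj})^{f}$; for $j\neq p^n$ this is $(a^{e_{cj}})^{f}=(a^f)^{e_{cj}}=a^{\,c^{-1}e_{\alpha(j)}}$, while for $j=p^n$, since $\pi^{-1}$ fixes $p^n$, it is $b^f$ itself. Hence $\psi(b^f)=(a^{\,c^{-1}e_{\alpha(1)}},\dots,a^{\,c^{-1}e_{\alpha(p^n-1)}},\,b^f)$, so $b^f$ is exactly the GGS generator attached to the vector $c^{-1}(e_{\alpha(1)},\dots,e_{\alpha(p^n-1)})$. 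This section computation is the main technical step and the one most prone to convention and sign slips: one must verify that $b^f$ stays in the last coordinate (which is precisely why $\pi$ must fix $x_{p^n}$, automatic for multiplication maps) and track the exponent $c^{-1}$ correctly.

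Finally I would assemble the conclusion. Because $c^{-1}$ is a unit, $\langle a^f\rangle=\langle a^{c^{-1}}\rangle=\langle a\rangle$, so $a\in G^f$ and therefore $G^f=\langle a,b^f\rangle$ is the GGS-group with defining vector $c^{-1}(e_{\alpha(1)},\dots,e_{\alpha(p^n-1)})$. This vector is the unit multiple $c^{-1}e_k$ of $\mathbf{e'}=e_k^{-1}(e_{\alpha(1)},\dots,e_{\alpha(p^n-1)})$, and since proportional defining vectors yield the same GGS-group, $G^f$ is precisely the GGS-group with defining vector $\mathbf{e'}$. Evaluating at the index $p^s$ gives $e'_{p^s}=e_k^{-1}e_{\alpha(p^s)}=e_k^{-1}e_k=1$, as required. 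The conceptual heart of the argument is that a single homogeneous automorphism already realizes the permutation $\alpha$, and the normalization by $e_k^{-1}$ in the statement is only a choice of representative for the defining vector.
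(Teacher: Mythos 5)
Your proposal is correct and takes essentially the same route as the paper: the paper's conjugator $f=dh$ with $\psi(h)=(f,\ldots,f)$ is precisely your homogeneous automorphism acting letterwise by a unit of $\Z/p^n\Z$ (the paper's $r$ playing the role of your $c^{-1}$), its $\alpha=\delta^{-1}$ is your multiplication by $c$, and both arguments finish by computing $a^f$ and $\psi(b^f)$ and then rescaling the resulting defining vector by a unit. The only cosmetic difference is that the paper defines $f$ recursively rather than by an explicit formula on words.
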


\begin{proof}
By hypothesis we can write $k=hp^s$ where $h\not \equiv 0\bmod p$.
If $r$ is a solution to the congruence $hr\equiv  1\bmod p^{n-s}$, then the permutation
$\delta\in\sym(p^n)$ given by $\delta(i)\equiv ri  \bmod p^n$ for every $i$ satisfies that
$\sigma^{\delta}=\sigma^r$ and $\delta(k)=p^s$.

Let us define $f\in\Aut\T$ recursively by $f=dh$, where $d$ is the rooted automorphism corresponding to $\delta$, and $h\in\st(1)$ is defined via $\psi(h)=(f,\ldots,f)$.
Note that $h$ commutes with any rooted automorphism, since its components under $\psi$ are all the same.
Then
\begin{equation}
\label{eqn:a^f}
a^f=(a^d)^h=(a^r)^h=a^r,
\end{equation}
since $a^d$ is the rooted automorphism corresponding to the permutation
$\sigma^{\delta}=\sigma^r$.

Now let $\alpha=\delta^{-1}$ and observe that $\alpha$ satisfies (i) and (ii).
Also
\[
\psi(b^f)
=
\psi(b^d)^{\psi(h)}
=
(a^{e_{\alpha(1)}},\ldots,a^{e_{\alpha(p^n-1)}},b)^{\psi(h)}
=
(a^{re_{\alpha(1)}},\ldots,a^{re_{\alpha(p^n-1)}},b^f),
\]
by using (\ref{eqn:a^f}).
Since $r\not\equiv 0\pmod p$, it follows that
$G^f=\langle a^r,b^f\rangle=\langle a,b^f \rangle$ is the GGS-group with defining vector
$(re_{\alpha(1)},\ldots,re_{\alpha(p^n-1)})$.
If we multiply this vector by the inverse of $re_k$ modulo $p^n$ then we see that $G^f$ is also the GGS-group with defining vector $\mathbf{e'}$, which proves (iii).
\end{proof}

It is easy to see that, for any subgroup $G$ of $\Aut\T$ and any $f\in\Aut(\T)$, we have
$\rst_G(u)^f=\rst_{G^f}(f(u))$ for every vertex $u$ of $\T$.
Hence $\rst_G(\ell)^f=\rst_{G^f}(\ell)$ for every $\ell\in\N$.
Thus, by the previous lemma, in order to study branch properties in a GGS-group, we may assume without loss of generality that $e_{p^t}=1$, where $t=t(\mathbf{e})$ is as in \eqref{def: t}.
In the remainder of this section, we fix the notation $k:=p^t$.

\begin{definition}
A GGS-group is \emph{invertible-symmetric}, IS for short, if the set $Y$ is symmetric, in the sense that $i$ belongs to $Y$ if and only if $p^n-i$ does.
In other words, a component $e_i$ of $\mathbf{e}$ is invertible if and only if $e_{p^n-i}$ is.
\end{definition}

We start our analysis of branch structures in GGS-groups by dealing with the case when $G$ is not IS.
We assume that $e_k=1$, and then $q:=e_{p^n-k}\equiv 0\bmod p$ by (ii) of Lemma~\ref{lem:reduced_definining_vector}.
We define a sequence $\{g_i\}_{i\ge 0}$ of automorphisms of $\T$ by means of \begin{equation}
\label{eq: g_i}
\psi(g_i) =  (1,\ldots,1,[a,b],1,\ldots,1)\cdot(1,\ldots,1,[b^{q^i},a^{q^{i+1}}],1,\ldots,1),
\end{equation}
where the non-trivial components appear in the $k$-th position and in the $(p^n-2ik)$-th position, respectively, the latter being understood modulo $p^n$.

\begin{lem} 
\label{lem:asymmetric_e} 
The sequence $\{g_i\}_{i\ge 0}$ defined in \eqref{eq: g_i} is contained in $\st_G(1)'$.
\end{lem}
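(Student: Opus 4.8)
The goal is to show that each $g_i$ defined by \eqref{eq: g_i} lies in $\st_G(1)'$. The plan is to prove that both tuples appearing in the product $\psi(g_i)$ are images of elements of $\st_G(1)'$ under $\psi$, and then conclude since $\psi$ is a homomorphism and $\st_G(1)'$ is generated by commutators of elements of $\st_G(1)$.

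\medskip

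First I would identify the generators of $\st_G(1)$ to work with, namely the elements $b_0,b_1,\ldots,b_{p^n-1}$ from \eqref{eqn:stG(1)}, whose images under $\psi$ are the cyclic shifts of $(a^{e_1},\ldots,a^{e_{p^n-1}},b)$ displayed earlier. The key computational fact is that $\psi$ intertwines commutators in $\st_G(1)'$ with componentwise commutators of the section tuples: for $u,v\in\st_G(1)$ we have $\psi([u,v])=[\psi(u),\psi(v)]$, where the right-hand commutator is taken in the direct product $G\times\cdots\times G$ and hence computed coordinatewise. This reduces the problem to exhibiting, for each of the two tuples, a commutator of suitable $b_j$'s whose coordinatewise evaluation produces a single nontrivial entry of the required form, with $1$'s elsewhere.

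\medskip

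For the first tuple $(1,\ldots,1,[a,b],1,\ldots,1)$ with $[a,b]$ in position $k=p^t$, I would use the fact (which holds since $e_k=1$ by our normalization) that some $b_j$ has an $a$ in the $k$-th coordinate while a second generator $b_{j'}$ has a $b$ there, with their other coordinates chosen so that the pairwise commutators vanish. Concretely, because a commutator $[a^{e_i},a^{e_{i'}}]$ of two powers of the rooted automorphism $a$ is trivial, and $[a^{e_i},b]$ or $[b,b]$ collapses appropriately, the coordinatewise commutator of the right pair of shifted generators will have $[a,b]$ (up to inversion) in position $k$ and $1$ in every other coordinate. The second tuple $(1,\ldots,1,[b^{q^i},a^{q^{i+1}}],1,\ldots,1)$ with nontrivial entry in position $p^n-2ik$ is handled the same way: I would pick two generators among the $b_j$ whose $(p^n-2ik)$-th coordinates are a power of $b$ and a power of $a$ respectively — here the exponents $q^i$ and $q^{i+1}$ arise precisely because $q=e_{p^n-k}$ is the relevant entry controlling how shifts by $-2ik$ move a $b$ past blocks of $a^{e_j}$'s — while forcing all other coordinates to commute.

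\medskip

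I expect the main obstacle to be the bookkeeping that guarantees \emph{all} the off-target coordinates are genuinely trivial, and that the surviving exponents come out as $q^i$ and $q^{i+1}$. This is where the hypothesis that $G$ is not IS enters decisively: it forces $q=e_{p^n-k}\equiv 0\bmod p$, and this divisibility is what makes the iterated shifts by $-2ik$ push the relevant $b$ through coordinates carrying powers of $a$ that are multiples of $q$, producing the nested powers $q^i,q^{i+1}$ rather than arbitrary exponents. Verifying that the chosen pairs of generators have matching $a$-powers (so that those coordinates cancel) in every position except the designated one is a careful but routine index chase; once it is in place, the decomposition $\psi(g_i)=\psi([u_1,v_1])\,\psi([u_2,v_2])$ with $[u_1,v_1],[u_2,v_2]\in\st_G(1)'$ gives $g_i\in\st_G(1)'$ immediately, since $\psi$ is injective on $\st_G(1)$.
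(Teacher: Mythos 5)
There is a genuine gap, and it lies exactly where you anticipated the ``main obstacle'': the off-target coordinates cannot be made trivial, and no amount of index chasing will fix this. Your plan is to realize each of the two tuples in \eqref{eq: g_i} separately as $\psi$ of a single commutator (or conjugated commutator) of elements of $\st_G(1)$, with exactly one nontrivial coordinate. But any commutator $[b_j^{m},b_{j'}^{m'}]$ of powers of the generators has nontrivial entries at \emph{both} positions $j$ and $j'$ (the positions carrying a power of $b$), namely $[b^{m},a^{m'e_{j-j'}}]$ and $[a^{m e_{j'-j}},b^{m'}]$, and such an entry vanishes only when the relevant exponent is $\equiv 0 \bmod p^n$. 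The hypothesis that $G$ is not IS only gives $q=e_{p^n-k}\equiv 0\bmod p$, not $\bmod\ p^n$, so for instance $[b,b_k]$ has the wanted entry $[a,b]$ at position $k$ but also the junk entry $[b,a^q]\ne 1$ at position $p^n$. In fact, if the tuple $(1,\ldots,1,[a,b],1,\ldots,1)$ were in $\psi(\st_G(1)')$ as you claim, then Lemma~\ref{lem: main tool for branchness} would immediately give $G'\times\cdots\times G'\subseteq\psi(\st_G(1)')$, and the entire sequence $\{g_i\}$ of Lemma~\ref{lem:asymmetric_e} would be pointless: that containment is precisely the conclusion of Theorem~\ref{thm:e_p^n-k}, which this lemma is designed to reach. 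So your argument assumes, for each $i$, essentially what the lemma-plus-theorem pair is built to prove.

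The paper's actual proof is an induction that never tries to isolate a single coordinate at any finite stage. It starts from $g_0=[b,b_k]$ (junk and all), and shows the recursion
\[
g_i = g_{i-1}\, [b^{q^i},b_k^{q^{i-1}}]^{a^{-(2i-1)k}}\, [b^{q^i},b_k^{q^i}]^{a^{-2ik}},
\]
where the two correcting factors are conjugates of commutators of elements of $\st_G(1)$, hence lie in $\st_G(1)'$ (which is normal in $G$). Coordinatewise, these factors cancel the junk entry of $g_{i-1}$ at position $p^n-2(i-1)k$ and recreate it at position $p^n-2ik$ with the exponents raised from $(q^{i-1},q^i)$ to $(q^i,q^{i+1})$; induction then gives $g_i\in\st_G(1)'$. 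The divisibility $p\mid q$ is not even needed for the lemma itself --- it is used only afterwards, in Theorem~\ref{thm:e_p^n-k}, where one takes $i=n$ so that $[b^{q^n},a^{q^{n+1}}]=1$ and the junk finally dies. To repair your proof you would have to abandon the two-commutator decomposition and find some telescoping or limiting mechanism of this kind; as written, the key step ``the coordinatewise commutator of the right pair of shifted generators will have $[a,b]$ in position $k$ and $1$ in every other coordinate'' is false in general.
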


\begin{proof}
Since $\psi([b,b_k])=(1,\ldots,1,[a,b],1,\ldots,1,[b,a^q])$,
where the non-trivial components are at positions $k$ at $p^n$, we have $g_0=[b,b_k]$.
Similarly,
\begin{equation}
\label{eq:comm [b,b_k]1}
\psi([b^{q^i},b_k^{q^{i-1}}]^{a^{-(2i-1)k}})
=
(1,\ldots,1,[a^{q^i},b^{q^{i-1}}],1,\ldots,1)
\cdot
(1,\ldots,1,[b^{q^i},a^{q^i}],1,\ldots,1),
\end{equation}
where the non-trivial components appear at positions $p^n-(2i-2)k$ and $p^n-(2i-1)k$,
respectively, and
\begin{equation}
\label{eq:comm [b,b_k]2}
\psi([b^{q^i},b_k^{q^{i}}]^{a^{-2ik}})
=
(1,\ldots,1,[a^{q^i},b^{q^i}],1,\ldots,1)
\cdot
(1,\ldots,1,[b^{q^i},a^{q^{i+1}}],1,\ldots,1),
\end{equation}
with non-trivial components at positions $p^n-(2i-1)k$ and $p^n-2ik$.
By combining \eqref{eq: g_i}, \eqref{eq:comm [b,b_k]1}, and \eqref{eq:comm [b,b_k]2}, one can readily check that 
\[
g_i = g_{i-1} [b^{q^i},b_k^{q^{i-1}}]^{a^{-(2i-1)k}} [b^{q^i},b_k^{q^i}]^{a^{-2ik}}
\]
for all $i\ge 1$.
Thus $g_i\in\st_G(1)'$ by induction on $i$.
 \end{proof}

The following result is a consequence of the previous lemma.

\begin{theorem}
\label{thm:e_p^n-k}
If $G$ is not IS then $\psi(\st_G(1)')=G'\times \cdots \times G'$.
In particular, $G$ is regular branch over $G'$.
\end{theorem}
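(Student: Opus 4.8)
The plan is to show first that $G' \times \cdots \times G' \subseteq \psi(\st_G(1)')$ and then establish the reverse inclusion, after which regular branchness over $G'$ follows from the finite index of $G'$ in $G$.

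For the forward inclusion, I would invoke Lemma~\ref{lem: main tool for branchness} with $L = G'$ and $N = \st_G(1)'$. Since $G' = \langle [a,b] \rangle^G$ (the commutator subgroup of a two-generator group is the normal closure of the single commutator $[a,b]$), the generating set in the sense of the lemma is $S = \{[a,b]\}$. To apply the lemma I must produce, for a fixed position, a tuple of the form $(1,\ldots,1,[a,b],1,\ldots,1)$ lying in $\psi(\st_G(1)')$. This is exactly where the sequence $\{g_i\}_{i \ge 0}$ from Lemma~\ref{lem:asymmetric_e} enters. Each $g_i$ lies in $\st_G(1)'$, and by \eqref{eq: g_i} its image under $\psi$ is a product of two one-component tuples: a fixed $[a,b]$ in the $k$-th slot, and a drifting commutator $[b^{q^i}, a^{q^{i+1}}]$ in the $(p^n - 2ik)$-th slot. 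The key observation is that $q \equiv 0 \bmod p$, so $q^i \to 0$ as $i$ grows: for $i$ large enough the element $[b^{q^i}, a^{q^{i+1}}]$ becomes trivial (since $b$ and $a$ have finite $p$-power order $p^n$, once $q^i \equiv 0 \bmod p^n$ the second factor vanishes). For such an $i$ we obtain $\psi(g_i) = (1,\ldots,1,[a,b],1,\ldots,1)$ with the nontrivial entry in position $k$, precisely the tuple required. Lemma~\ref{lem: main tool for branchness} then yields $G' \times \cdots \times G' \subseteq \psi(\st_G(1)')$.

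For the reverse inclusion $\psi(\st_G(1)') \subseteq G' \times \cdots \times G'$, I would argue as follows. By \eqref{eqn:stG(1)} we have $\st_G(1) = \langle b_0, \ldots, b_{p^n-1}\rangle$, and each $\psi(b_j)$ is a tuple whose entries are powers of $a$ and one copy of $b$. Thus $\psi(\st_G(1))$ is contained in the direct product $\langle a, b\rangle \times \cdots \times \langle a, b\rangle = G \times \cdots \times G$, and since $\psi$ is a homomorphism, $\psi(\st_G(1)') \subseteq (G \times \cdots \times G)' = G' \times \cdots \times G'$. Combining both inclusions gives the claimed equality $\psi(\st_G(1)') = G' \times \cdots \times G'$.

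Finally, to conclude that $G$ is regular branch over $G'$, I note that $G' \le \st_G(1)$ (as $G/G'$ is abelian and $a,b$ generate, with the section-realized structure placing commutators in the first-level stabilizer), so $G'$ is a nontrivial normal subgroup contained in $\st_G(1)$, and the inclusion $G' \times \cdots \times G' \subseteq \psi(G')$ follows from the equality just proved together with $\st_G(1)' \le G'$. Since $|G:G'|$ is finite by Theorem~\ref{thm:abelianisation}, $G$ is regular branch over $G'$. The main obstacle is the forward inclusion, specifically verifying that the second factor in $\psi(g_i)$ genuinely vanishes for large $i$ — this relies on the finiteness of the orders of $a$ and $b$ and on $q \equiv 0 \bmod p$, which is guaranteed precisely by the non-IS hypothesis via part (ii) of Lemma~\ref{lem:reduced_definining_vector}.
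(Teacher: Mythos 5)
Your proposal is correct and follows essentially the same route as the paper: the reverse inclusion via self-similarity of $G$, the forward inclusion by letting $i$ grow in the sequence $\{g_i\}$ of Lemma~\ref{lem:asymmetric_e} until the factor $[b^{q^i},a^{q^{i+1}}]$ vanishes (the paper simply takes $i=n$, since $q^n\equiv 0\bmod p^n$ and $o(b)=p^n$), and then Lemma~\ref{lem: main tool for branchness} applied with $G'=\langle [a,b]\rangle^G$. One small slip worth fixing: your parenthetical justification of $G'\le \st_G(1)$ should read that $G/\st_G(1)$ is cyclic (generated by the image of $a$), hence abelian, rather than appealing to $G/G'$ being abelian.
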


\begin{proof}
The inclusion $\subseteq$ is obvious since $G$ is self-similar, so we only need to prove
$\supseteq$.
Let $g_i$ be defined as in \eqref{eq: g_i}.
Since $o(b)=p^n$ and $q$ is divisible by $p$, we have $[b^{q^n},a^{q^{n+1}}]=1$.
Thus $\psi(g_n)$ has all components  equal to $1$ with the exception of the component at
position $k$, which is equal to $[a,b]$.
Since $g_n\in\st_G(1)'$ by Lemma~\ref{lem:asymmetric_e} and $G'=\langle[a,b]\rangle^G$, the desired inclusion follows from Lemma~\ref{lem: main tool for branchness}.
\end{proof}

After having proved Theorem~\ref{thm:e_p^n-k}, we next assume that $G$ is IS.
We continue our analysis of branch structures by considering the case when $Y$ is not maximal, that is, when $Y\subsetneq \{p^t,2p^t,\ldots,p^n-p^t\}$.

Let $h$ be the smallest integer in $\{1,\ldots,p^{n-t}-1\}$ such that $hk\not\in Y$.
Note that $h\ge 2$.
Then we set $q:=e_{p^n-hk}$, $y:=e_{p^n-(h-1)k}$, and $z:=e_{p^n-k}$; in other words, $q$, $y$ and $z$ are the symmetrical components of $e_{hk}$, $e_{(h-1)k}$ and $e_k$.
Thus $p$ divides $q$, and $y$ and $z$ are invertible modulo $p$.
In this case we define a sequence $\{g_i\}_{i\ge 0}$ of automorphisms of $\T$ as follows:
\begin{equation}
\label{eq: g_i IS}
\psi(g_i)
=
(1,\ldots,1,[a,b,a],1,\ldots,1)\cdot(1,\ldots,1,[b^{z^i},a^{z^{i+1}}, a^{q^{2i+1}y^{-(2i+1)}}], 1,\ldots,1),
\end{equation}
where the non-trivial components are the $k$-th and the $(p^n-2ik)$-th.

\begin{lem}
\label{lem:sequence for IS vector}
The sequence $\{g_i\}_{i\ge 0}$ defined in \eqref{eq: g_i IS} is contained in
$\gamma_3(\st_G(1))$.
\end{lem}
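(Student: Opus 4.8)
The plan is to follow the strategy of Lemma~\ref{lem:asymmetric_e}, pushing the construction one commutator deeper: I would prove by induction on $i$ that each $g_i$ equals an explicit product of weight-three commutators of the generators $b_0,\ldots,b_{p^n-1}$ of $\st_G(1)$, conjugated by powers of $a^k$. Two facts make this legitimate. First, $\st_G(1)$ is normal in $G$, so $\gamma_3(\st_G(1))$ is characteristic in $\st_G(1)$ and hence normal in $G$; therefore conjugating an element of $\gamma_3(\st_G(1))$ by a power of $a$ keeps it inside $\gamma_3(\st_G(1))$. Second, because $\psi$ is an isomorphism onto a direct product, for $u,v,w\in\st_G(1)$ the tuple $\psi([u,v,w])$ is computed coordinatewise, its $j$-th entry being $[u_j,v_j,w_j]$; this entry vanishes whenever $u_j,v_j,w_j$ are all powers of $a$, so only a couple of coordinates survive in each bracket.

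For the base case I would take $g_0=[b,b_k,b_{hk}^{\,y^{-1}}]$, which lies in $\gamma_3(\st_G(1))$ since $b,b_k,b_{hk}\in\st_G(1)$ and $y$ is invertible modulo $p^n$. Recalling that $\psi([b,b_k])$ has entries $[a,b]$ in position $k$ and $[b,a^z]$ in position $p^n$ (here $z=e_{p^n-k}$), one commutes once more with $b_{hk}^{\,y^{-1}}$. In position $k$ this gives $[[a,b],a^{y^{-1}e_{k-hk}}]$; since $k-hk\equiv p^n-(h-1)k\pmod{p^n}$ we have $e_{k-hk}=y$, so the entry is exactly $[a,b,a]$. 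In position $p^n$ it gives $[[b,a^z],a^{y^{-1}e_{p^n-hk}}]=[b,a^z,a^{qy^{-1}}]$, using $q=e_{p^n-hk}$. Every other coordinate involves only powers of $a$ and is trivial, so $\psi(g_0)$ agrees with \eqref{eq: g_i IS} at $i=0$.

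For the inductive step I would produce a telescoping recursion
\[
g_i=g_{i-1}\cdot C_i^{(1)}\cdot C_i^{(2)},
\]
where $C_i^{(1)}$ and $C_i^{(2)}$ are conjugates, by $a^{-(2i-1)k}$ and $a^{-2ik}$ respectively, of weight-three commutators built from $b$, $b_k$ and $b_{hk}$ with powers adjusted by $z^{i-1},z^i$ and by suitable powers of $q$ and $y^{-1}$. The exponents are dictated by the coordinate computation: descending past position $p^n-k$ multiplies the middle commutator slot by $z$, which accounts for the powers $z^i$ and $z^{i+1}$, while the third slot accumulates the ratio $q/y$ coming from $e_{p^n-hk}=q$ and the $y^{-1}$ inside $b_{hk}$, producing $a^{q^{2i+1}y^{-(2i+1)}}$ in position $p^n-2ik$. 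The two conjugations are arranged so that the surviving coordinate of $C_i^{(1)}$ coming from position $k$ cancels the second entry of $g_{i-1}$ at position $p^n-2(i-1)k$, the remaining coordinate of $C_i^{(1)}$ and one coordinate of $C_i^{(2)}$ cancel each other at position $p^n-(2i-1)k$, and the last coordinate of $C_i^{(2)}$ installs the new second entry at position $p^n-2ik$; position $k$ is left untouched and keeps $[a,b,a]$. Since $C_i^{(1)},C_i^{(2)}\in\gamma_3(\st_G(1))$ by normality, the recursion gives $g_i\in\gamma_3(\st_G(1))$ by induction.

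I expect the main obstacle to be making these cancellations exact at the level of group elements rather than merely modulo a deeper term of the lower central series. In Lemma~\ref{lem:asymmetric_e} the colliding coordinates are single commutators, so they cancel on the nose via $[x,y]^{-1}=[y,x]$; here they are weight-three commutators, and the naive left-normed choice only makes the coordinates meeting at position $p^n-(2i-1)k$ inverse up to an element of $\gamma_4$. The delicate point is therefore to fix the bracketing and the order of the arguments in $C_i^{(1)}$ and $C_i^{(2)}$ --- for instance mixing left-normed and right-normed commutators, or swapping the roles of $b$ and $b_k$ between the two brackets --- so that the meeting coordinates are genuine inverses. This is the one spot where the argument demands care beyond the asymmetric case, and it is the step I would write out in full, the remaining coordinate checks being routine once $g_0$ and the recursion are in place. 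Finally, the clean entry $[a,b,a]$ in position $k$ is exactly what is needed to feed the $g_i$ into Lemma~\ref{lem: main tool for branchness} in the sequel.
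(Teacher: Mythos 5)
Your plan is essentially the paper's proof: the base case is the same element $g_0=[b,b_k,b_{hk}^{y^{-1}}]$ (verified the same way), and the inductive step is the same telescoping recursion with two correction terms conjugated by $a^{-(2i-1)k}$ and $a^{-2ik}$. The one point you flag as the main obstacle --- making the cancellations exact at the level of group elements rather than modulo $\gamma_4$ --- is resolved in the paper by combining precisely the two fixes you suggest, and it then requires no care at all. The paper writes the recursion as $g_i=g_{i-1}c_1^{-1}c_2$ with
\[
c_1=[b_k^{z^{i-1}},b^{z^i},b_{hk}^{q^{2i-1}y^{-2i}}]^{a^{-(2i-1)k}},
\qquad
c_2=[b^{z^i},b_k^{z^i},b_{hk}^{q^{2i}y^{-(2i+1)}}]^{a^{-2ik}}.
\]
Two things make the cancellation trivial. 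First, the roles of $b$ and $b_k$ are swapped between the two brackets ($c_1$ begins with $b_k^{z^{i-1}}$, $c_2$ with $b^{z^i}$), with the effect that the coordinate of $c_1$ at position $p^n-(2i-1)k$ and the coordinate of $c_2$ at that same position are the \emph{same} left-normed commutator $[a^{z^i},b^{z^i},a^{q^{2i}y^{-2i}}]$, and the coordinate of $c_1$ at position $p^n-(2i-2)k$ is literally \emph{equal} to the second nontrivial entry of $g_{i-1}$. Second, instead of hunting for a commutator whose coordinates are inverses of the entries to be killed, one multiplies by the group inverse $c_1^{-1}$: since $\psi$ inverts coordinatewise, $g_{i-1}c_1^{-1}c_2$ has exact cancellation at positions $p^n-(2i-2)k$ and $p^n-(2i-1)k$, while $c_1^{-1}$ still lies in $\gamma_3(\st_G(1))$ because that subgroup is closed under inversion (and normal in $G$, as you note, so the conjugations are harmless). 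In short: make the colliding coordinates equal, not mutually inverse, and invert one of the whole elements --- which, unwound, is exactly your ``right-normed bracket'' $[w,[u,v]]=[u,v,w]^{-1}$. With that observation your outline goes through verbatim and is the paper's argument.
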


\begin{proof}
It is easy to see that $g_0=[b,b_k,b_{hk}^{y^{-1}}]$.
We claim that $g_i=g_{i-1}c_1^{-1}c_2$, where
\[
c_1
=
[b_k^{z^{i-1}},b^{z^i},b_{hk}^{q^{2i-1}y^{-2i}}]^{a^{-(2i-1)k}}
\]
and
\[
c_2
=
[b^{z^i},b_k^{z^i},b_{hk}^{q^{2i}y^{-(2i+1)}}]^{a^{-2ik}}.
\]
Then $g_i$ belongs to $\gamma_3(\st_G(1))$ by induction on $i$.

The claim follows immediately from \eqref{eq: g_i IS} by taking into account that
\begin{multline*}
\psi(c_1)
=
(1,\ldots,1,[b^{z^{i-1}},a^{z^i},a^{q^{2i-1}y^{-(2i-1)}}],1,\ldots,1)
\\
\cdot
(1,\ldots,1,[a^{z^{i}},b^{z^i},a^{q^{2i}y^{-2i}}],1,\ldots,1),
\end{multline*}
where the non-trivial components appear in positions $p^n-(2i-2)k$ and $p^n-(2i-1)k$, and that
\begin{multline*}
\psi(c_2)
=
(1,\ldots,1,[a^{z^i},b^{z^{i}},a^{q^{2i}y^{-2i}}],1,\ldots,1)
\\
\cdot
(1,\ldots,1,[b^{z^i},a^{z^{i+1}},a^{q^{2i+1}y^{-(2i+1)}}],1,\ldots,1),
\end{multline*}
with non-trivial components $p^n-(2i-1)k$ and $p^n-2ik$.
\end{proof}

\begin{theorem}
\label{thm:IS_with_zeros}
If $G$ is IS and $Y$ is not maximal, then
$\psi(\gamma_3(\st_G(1)))=\gamma_3(G)\times \cdots \times \gamma_3(G)$.
In particular, $G$ is regular branch over $\gamma_3(G)$. 
\end{theorem}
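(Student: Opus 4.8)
The plan is to follow the same strategy used in the proof of Theorem~\ref{thm:e_p^n-k}, but now working one level up in the lower central series. The inclusion $\subseteq$ is immediate from self-similarity, so the entire content lies in proving $\supseteq$. First I would invoke Lemma~\ref{lem:sequence for IS vector}, which guarantees that every $g_i$ defined in \eqref{eq: g_i IS} lies in $\gamma_3(\st_G(1))$. The point of the explicit sequence is that the second nontrivial component slides to positions $p^n-2ik$ as $i$ grows, while the commutator entry $[b^{z^i},a^{z^{i+1}},a^{q^{2i+1}y^{-(2i+1)}}]$ eventually collapses: since $p\mid q$ and $b$ has order $p^n$, for $i$ large enough the factor $a^{q^{2i+1}y^{-(2i+1)}}$ becomes trivial (its exponent is divisible by a sufficiently high power of $p$), forcing that whole triple commutator to vanish.

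Once that second component dies, I am left with an element $g_I$ of $\gamma_3(\st_G(1))$ whose image under $\psi$ is $(1,\ldots,1,[a,b,a],1,\ldots,1)$, with the single nontrivial entry $[a,b,a]$ in the $k$-th position. The next step is to identify $\gamma_3(G)$ as $\langle [a,b,a]\rangle^G$; this should hold because $G/\gamma_3(G)$ is generated by the images of $a$ and $b$ with $G'/\gamma_3(G)$ central, so $\gamma_3(G)$ is the normal closure of basic triple commutators in $a$ and $b$, and the IS hypothesis together with the relations in \eqref{eqn:stG(1)} should let me reduce everything to the single generator $[a,b,a]$ up to conjugacy. Granting this, I apply Lemma~\ref{lem: main tool for branchness} with $L=\gamma_3(G)=\langle S\rangle^G$ for $S=\{[a,b,a]\}$ and $N=\gamma_3(\st_G(1))$: the lemma exactly packages the fact that a single-entry tuple $(1,\ldots,1,s,1,\ldots,1)\in\psi(N)$ for each generator $s$ of $L$ propagates to the full product $\gamma_3(G)\times\cdots\times\gamma_3(G)\subseteq\psi(\gamma_3(\st_G(1)))$.

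The main obstacle I anticipate is justifying the termination of the collapse precisely, that is, verifying that there genuinely exists an index $I$ for which the exponent $q^{2I+1}y^{-(2I+1)}$ is divisible by $p^n$ (equivalently, that $q^{2I+1}\equiv 0\bmod p^n$), and confirming that at that index the second tuple factor really is trivial rather than merely living in a position that overlaps the first. Because $q=e_{p^n-hk}$ is divisible by $p$ and $y$ is a unit modulo $p$, the exponent is a unit times $q^{2I+1}$, so divisibility by $p^n$ is automatic once $2I+1\ge n$; I should state this bound cleanly and check there is no accidental positional coincidence between $k$ and $p^n-2Ik$ that would merge the two factors before the collapse. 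The second subtlety is the identity $\gamma_3(G)=\langle[a,b,a]\rangle^G$, which in principle also requires $[a,b,b]$ to lie in this normal closure; I expect this follows from the structure of $\st_G(1)$ and the abelianization in Theorem~\ref{thm:abelianisation}, but it is the step most in need of care, so I would address it before appealing to Lemma~\ref{lem: main tool for branchness}. Finiteness of the index $|G:\gamma_3(G)|$, and hence the \emph{regular} branch conclusion, is immediate from the remark following Theorem~\ref{thm:abelianisation} that all lower central terms have finite index.
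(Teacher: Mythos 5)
Your first half coincides with the paper's argument and is fine: since $p\mid q$ and $a$ has order $p^n$, once $2i+1\ge n$ the exponent $q^{2i+1}y^{-(2i+1)}$ is divisible by $p^n$, so the second factor in \eqref{eq: g_i IS} is trivial and $\psi(g_i)=(1,\ldots,1,[a,b,a],1,\ldots,1)$ with $[a,b,a]$ in position $k$; your worry about a positional coincidence between $k$ and $p^n-2ik$ is harmless, because after the collapse the second factor is the identity tuple wherever it sits. The genuine gap is precisely the step you flag but do not resolve: the claim $\gamma_3(G)=\langle[a,b,a]\rangle^G$. For a $2$-generated group the correct statement is $\gamma_3(G)=\langle[a,b,a],[a,b,b]\rangle^G$ (to make $[a,b]$ central modulo the normal closure one needs it to commute with \emph{both} generators), and nothing you propose shows $[a,b,b]\in\langle[a,b,a]\rangle^G$. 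In particular the route you suggest via Theorem~\ref{thm:abelianisation} cannot work: the abelianization carries no information below $G'$, so it cannot detect whether $[a,b,b]$ is trivial modulo $\langle[a,b,a]\rangle^G$. (Only in special situations, such as Theorem~\ref{thm:partially constant} where $e_1=0$ forces $[b,a,b]=1$, does a single normal generator suffice.) So as written, Lemma~\ref{lem: main tool for branchness} only yields $\langle[a,b,a]\rangle^G\times\cdots\times\langle[a,b,a]\rangle^G\subseteq\psi(\gamma_3(\st_G(1)))$, which is weaker than the theorem.

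The paper closes this hole not by reducing $\gamma_3(G)$ to one normal generator, but by producing a \emph{second} single-entry element of $\gamma_3(\st_G(1))$ — and this is where the IS hypothesis is used in the second half (note that your argument never invokes IS after the collapse). Since $z=e_{p^n-k}$ is invertible modulo $p$, one computes
\[
\psi(b_k^{z^{-1}}b_{-k}^{-1})=(\ldots,b^{z^{-1}}a^{-e_{2k}},\ldots,1),
\]
with $k$-th entry $b^{z^{-1}}a^{-e_{2k}}$ and last entry trivial, and hence
\[
\psi([b,b_k,b_k^{z^{-1}}b_{-k}^{-1}])=(1,\ldots,1,[a,b,b^{z^{-1}}a^{-e_{2k}}],1,\ldots,1),
\]
with the non-trivial entry again in position $k$. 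Setting $c:=b^{z^{-1}}a^{-e_{2k}}$, one has $G=\langle a,c\rangle$, so $\gamma_3(G)=\langle[a,b,a],[a,b,c]\rangle^G$, and Lemma~\ref{lem: main tool for branchness} applies with the two-element set $S=\{[a,b,a],[a,b,c]\}$, both generators appearing at the same position $k$. Without this second element (or an actual proof that $[a,b,b]\in\langle[a,b,a]\rangle^G$), your argument does not reach $\gamma_3(G)\times\cdots\times\gamma_3(G)$.
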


\begin{proof}
Since $p$ divides $q$ and $a$ has order $p^n$, for large enough $i$ we have
\[ 
 \psi(g_i)=(1,\ldots,1,[a,b,a],1,\ldots,1),
\]
where $[a,b,a]$ appears in the $k$-th component.
Also $g_i\in\gamma_3(\st_G(1)))$ by Lemma~\ref{lem:sequence for IS vector}.
Moreover, since $z$ is invertible modulo $p$,
\[
\psi(b_k^{z^{-1}}b_{-k}^{-1})=(\ldots,b^{z^{-1}}a^{-e_{2k}},\ldots,1)
\]
where the first displayed component is the $k$-th one.
Hence
\[
\psi([b,b_k,b_k^{z^{-1}}b_{-k}^{-1}])=(1,\ldots,1,[a,b,b^{z^{-1}}a^{-e_{2k}}],1,\ldots,1)
\]
where the non-trivial component appears in position $k$.
Since $G=\langle a,b^{z^{-1}}a^{-e_{2k}}\rangle$ then 
\[
\gamma_3(G)=\langle[ a,b,a],[a,b,b^{z^{-1}}a^{-e_{2k}}] \rangle^G,
\]
and the result follows from Lemma~\ref{lem: main tool for branchness}.
\end{proof}

We now consider the case when $Y$ is maximal.
In this case $G$ is trivially IS.

\begin{theorem}
Suppose that $Y$ is maximal.
If there exists $m\in Y\setminus\{k,p^n-k\}$ such that
\[
\delta_m:=
\det
\begin{pmatrix}
e_{m-k} & e_m
\\
e_m & e_{m+k}
\end{pmatrix}
\not\equiv 0\bmod p
\]
then $\psi(\gamma_3(\st_G(1)))=\gamma_3(G)\times \cdots \times \gamma_3(G)$.
In particular, $G$ is regular branch over $\gamma_{3}(G)$.
\end{theorem}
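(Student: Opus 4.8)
The plan is to verify the hypotheses of Lemma~\ref{lem: main tool for branchness} with $L=\gamma_3(G)$ and $N=\gamma_3(\st_G(1))$. As in the previous results I may assume $e_k=1$, where $k=p^t$, after replacing $G$ by a suitable conjugate via Lemma~\ref{lem:reduced_definining_vector}. Since $G=\langle a,b\rangle$ and $G'=\langle[a,b]\rangle^G$, we have $\gamma_3(G)=\langle [a,b,a],\,[a,b,b]\rangle^G$, so it suffices to produce two elements of $\gamma_3(\st_G(1))$ whose images under $\psi$ are $(1,\dots,1,[a,b,a],1,\dots,1)$ and $(1,\dots,1,[a,b,b],1,\dots,1)$, with the non-trivial entry in both cases sitting in one fixed coordinate, which I take to be the $m$-th.

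First I would record the base commutator $[b_{m-k},b_m]$. Since $e_k=1$ and $Y$ is maximal, a direct computation with the displayed formulas for the $\psi(b_i)$ shows that $\psi([b_{m-k},b_m])$ is supported exactly on the two coordinates $m-k$ and $m$, with entries $[b,a^z]$ and $[a,b]$ respectively, where $z:=e_{p^n-k}$ is invertible modulo $p$. The task is then to annihilate the spurious entry in coordinate $m-k$ by a careful choice of the third factor of a triple commutator, rather than by a divisibility argument as in Theorems~\ref{thm:e_p^n-k} and~\ref{thm:IS_with_zeros}; such an argument is unavailable here, since every coordinate of $\mathbf{e}$ indexed by a multiple of $k$ is invertible.

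The key construction is to look for $u\in\st_G(1)$ of the form $u=b_0^{\,c_0}\,b_{p^n-k}^{\,c_1}$ (for the second generator, multiplied additionally by $b_m$), with exponents $c_0,c_1\in\Z$ to be determined. Because $0,\,p^n-k\notin\{m-k,m\}$ under the assumption $m\in Y\setminus\{k,p^n-k\}$, the sections of $u$ in coordinates $m-k$ and $m$ are honest powers of $a$, and they combine additively: they are $a$ raised to $c_0e_{m-k}+c_1e_m$ and $c_0e_m+c_1e_{m+k}$ respectively. This leads to a linear system with coefficient matrix
\[
\begin{pmatrix} e_m & e_{m+k}\\ e_{m-k} & e_m \end{pmatrix},
\]
whose determinant is $-\delta_m$, so the hypothesis $\delta_m\not\equiv 0\bmod p$ is exactly the statement that this matrix is invertible over $\Z/p^n\Z$. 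Solving it with right-hand sides $(1,0)^{\top}$ and $(0,-z)^{\top}$ yields elements $u_1$ and $u_2=b_m u'$ with $\psi(u_1)_m=a$, $\psi(u_2)_m=b$ and $\psi(u_1)_{m-k}=\psi(u_2)_{m-k}=1$. Forming $[b_{m-k},b_m,u_1]$ and $[b_{m-k},b_m,u_2]$, both of which lie in $\gamma_3(\st_G(1))$, the coordinate $m-k$ dies on the nose while the coordinate $m$ becomes $[a,b,a]$ and $[a,b,b]$ respectively, all remaining coordinates being already trivial for $[b_{m-k},b_m]$. Lemma~\ref{lem: main tool for branchness} then gives $\gamma_3(G)\times\cdots\times\gamma_3(G)\subseteq\psi(\gamma_3(\st_G(1)))$; the reverse inclusion is immediate from self-similarity, and regular branchness follows because $|G:\gamma_3(G)|$ is finite.

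The step I would be most careful about is the \emph{exact} cancellation in coordinate $m-k$. Combining the triple commutators $[b_{m-k},b_m,b_l]$ multiplicatively only cancels the spurious entry modulo $\gamma_4(G)$, which is not enough for Lemma~\ref{lem: main tool for branchness}; the reason for pushing the whole linear-algebra computation into the single third entry $u$ is precisely that its relevant sections are genuine powers of $a$, so the obstruction to killing coordinate $m-k$ while prescribing coordinate $m$ is governed exactly by invertibility of the matrix above over $\Z/p^n\Z$, that is, by $\delta_m\not\equiv0\bmod p$. I would also check the handful of degenerate indices, for instance that $b_0,b_{p^n-k},b_m,b_{m-k}$ are genuinely distinct, which is guaranteed by the restriction $m\in Y\setminus\{k,p^n-k\}$.
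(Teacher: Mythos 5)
Your proof is correct and follows essentially the same route as the paper's: after the normalization $e_k=1$, both arguments use the invertibility over $\Z/p^n\Z$ of the $2\times 2$ matrix in the statement to manufacture elements of $\st_G(1)$ whose sections at two chosen coordinates are $(1,a)$ and $(1,b)$, then take triple commutators with a commutator of two $b_i$'s supported exactly on those two coordinates, and finish with Lemma~\ref{lem: main tool for branchness} applied to $S=\{[a,b,a],[a,b,b]\}$. The only difference is bookkeeping: the paper places the nontrivial entries at coordinate $k$ and kills the last coordinate (via $b_{-m}^{e_{m-k}}b_{-m+k}^{-e_m}$ and $b_kb_{-k}^{-e_{p^n-k}}$), whereas you place them at coordinate $m$ and kill coordinate $m-k$ (via $b_0^{c_0}b_{-k}^{c_1}$), which is an immaterial variation.
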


\begin{proof}
In the following formulas the displayed components are the $k$-th and the last one.
We observe that
\[
\psi(b_{-m}^{e_{m-k}}b_{-m+k}^{-e_{m}})
=
(\ldots,a^{\delta_m},\ldots,  1).
\]
Since by hypothesis $\delta_m$ is invertible modulo $p^n$, there exists $g\in\st_G(1)$ such that
$\psi(g)=(\ldots,a,\ldots,1)$.
On the other hand,
\[
\psi(b_kb_{-k}^{-e_{p^n-k}})
=
(\ldots,ba^{-e_{2k}e_{p^n-k}},\ldots,1)
\]
and so by multiplying $b_kb_{-k}^{-e_{p^n-k}}$ by a suitable power of $g$ we can find $h\in\st_{G}(1)$ such that $\psi(h)=(\ldots,b,\ldots,1)$.
Consequently
\[
\psi([b,b_{k},g])=(1,\ldots,1,[a,b,a],1,\ldots, 1),
\qquad
\psi([b,b_{k},h])=(1,\ldots,1,[a,b,b],1,\ldots,  1),
\]
and $\psi(\gamma_3(\st_G(1)))=\gamma_3(G)\times \cdots \times \gamma_3(G)$
by Lemma~\ref{lem: main tool for branchness}.
\end{proof}

\begin{theorem}
Suppose that $Y$ is maximal and that for all $m\in Y\setminus\{k,p^n-k\}$ we have
\begin{equation}
\label{delta_m is 0}
\delta_m
:=
\det
\begin{pmatrix}
e_{m-k} & e_m
\\
e_m & e_{m+k}
\end{pmatrix}
\equiv 0\bmod p.
\end{equation}
If $\mathbf{e}\not\in\E(p^n)$ then
$\psi(\gamma_3(\st_G(1)))=\gamma_3(G)\times \cdots \times \gamma_3(G)$.
In particular, $G$ is regular branch over $\gamma_3(G)$.
\end{theorem}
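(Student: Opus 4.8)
The plan is first to translate the hypotheses into a single statement about the entries of $\mathbf e$ modulo $p$. Since $e_k=1$ and $Y$ is maximal, every multiple $jk$ with $1\le j\le N-1$ (where $N:=p^{n-t}$, so $p^n=Nk$) lies in $Y$, whence each $e_{jk}$ is invertible mod $p$. For $m=jk$ with $2\le j\le N-2$ the vanishing \eqref{delta_m is 0} of $\delta_m$ reads $e_{(j-1)k}e_{(j+1)k}\equiv e_{jk}^2\bmod p$, i.e. the consecutive ratios $e_{(j+1)k}/e_{jk}$ are all congruent to one value $r$; hence $e_{jk}\equiv r^{\,j-1}\bmod p$ for $1\le j\le N-1$. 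Because $Y$ is maximal, $\mathbf e\notin\E(p^n)$ says exactly that $\mathbf e$ is not constant modulo $p$ on $Y$, that is $r\not\equiv 1\bmod p$ (in particular $p$ must be odd, since for $p=2$ all invertible entries are $\equiv 1$ and the hypothesis is then vacuous). I would also record the elementary congruence $r^{N}=r^{p^{n-t}}\equiv r\bmod p$, which is what makes this geometric progression wrap around nontrivially.

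As usual the inclusion $\subseteq$ is immediate from self-similarity, so the point is $\supseteq$, and by Lemma \ref{lem: main tool for branchness} it suffices to realize a normal generating set of $\gamma_3(G)$ as the single non-trivial coordinate of elements of $\gamma_3(\st_G(1))$. Assuming $N\ge 4$ (the case $N=3$ is isolated below), I fix $c$ with $3\le c\le N-1$ and consider the base commutator $[b_k,b_{ck}]\in\st_G(1)'$; its only non-trivial sections sit at positions $k$ and $ck$ and equal $[b,a^{c'}]$ and $[a^{c''},b]$, where $c':=e_{(1-c)k}$ and $c'':=e_{(c-1)k}$ are units. For any $g\in\st_G(1)$ the element $[b_k,b_{ck},g]$ lies in $\gamma_3(\st_G(1))$ and has sections only at $k$ and $ck$, namely $[b,a^{c'},\psi(g)_k]$ and $[a^{c''},b,\psi(g)_{ck}]$. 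Thus it is enough to find $g,h\in\st_G(1)$ with
\[
(\psi(g)_k,\psi(g)_{ck})=(a,1),\qquad (\psi(h)_k,\psi(h)_{ck})=(b,1),
\]
for then $[b_k,b_{ck},g]$ and $[b_k,b_{ck},h]$ are concentrated at position $k$, with values $[b,a^{c'},a]$ and $[b,a^{c'},b]$. Writing $w=[b,a^{c'}]$ and using that $a^{c'}$ generates $\langle a\rangle$, one checks that $[w,a^{c'}]$ lies in $M:=\langle[w,a],[w,b]\rangle^G$ whenever $[w,a]$ does, so that $M=\gamma_3(G)$; Lemma \ref{lem: main tool for branchness} then applies to $S=\{[b,a^{c'},a],[b,a^{c'},b]\}$.

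The heart of the argument is producing $g$ and $h$, and this is exactly where $r\not\equiv1$ is used. I would seek $g$ as a product $\prod_l b_{lk}^{c_l}$ over $l\in\{0,1,\dots,N-1\}\setminus\{1,c\}$, for which both tracked sections are powers of $a$, namely $a^{\sum_l c_l e_{(1-l)k}}$ at $k$ and $a^{\sum_l c_l e_{(c-l)k}}$ at $ck$ (for $h$ I prepend $b_k$ and solve the analogous inhomogeneous system). Existence of the integers $c_l$ reduces to surjectivity of the $\Z/p^n\Z$-linear map $\mathbf c\mapsto(\sum_l c_l e_{(1-l)k},\sum_l c_l e_{(c-l)k})$, which holds as soon as it holds modulo $p$, i.e. as soon as some $2\times 2$ minor of the coefficients is a unit mod $p$. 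Taking the columns $l=0$ and $l=2$ (both admissible because $c\ge 3$) and substituting $e_{jk}\equiv r^{\,j-1}$, the minor is
\[
e_{k}\,e_{(c-2)k}-e_{(N-1)k}\,e_{ck}\equiv r^{c-3}\bigl(1-r^{N}\bigr)\equiv r^{c-3}(1-r)\not\equiv0\bmod p,
\]
using $e_{(1-2)k}=e_{(N-1)k}$ and $r^{N}\equiv r$. This is the main obstacle: a priori all usable section-vectors are proportional modulo $p$, and only the wrap-around of the progression, available precisely because $r^{N}\equiv r\not\equiv1$, supplies a second independent direction; note that it fails for the mirror pair of positions $(k,p^n)$, where the sole wrapping generator would be $b_0$, which is inadmissible — this is why a non-mirror second position $ck$ must be chosen.

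Granting such $g$ and $h$, Lemma \ref{lem: main tool for branchness} yields $\gamma_3(G)\times\cdots\times\gamma_3(G)\subseteq\psi(\gamma_3(\st_G(1)))$, hence equality, and since every lower central term of $G$ has finite index, $G$ is regular branch over $\gamma_3(G)$. The one remaining point is the boundary case $N=3$ (which forces $p=3$ and $n-t=1$), where no admissible $c\ge3$ exists and every $2\times2$ minor above vanishes. There I would run the same concentration scheme, but with the three commutators $[b_0,b_k]$, $[b_k,b_{2k}]$, $[b_0,b_{2k}]$ simultaneously: their coefficient matrix over the positions $\{k,2k,p^n\}$ has determinant $\equiv 1-r^{3}\equiv 1-r\not\equiv0\bmod 3$, so one can again isolate a single coordinate and conclude exactly as before.
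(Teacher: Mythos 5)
Your treatment of the main case $N\ge 4$ is correct, and it is in essence the same strategy as the paper's (the paper splits into $|Y|\ge 3$, i.e.\ $N\ge 4$, and the vacuous case $|Y|\le 2$, i.e.\ $N=3$): reduce the hypothesis to the geometric progression $e_{jk}\equiv r^{j-1}\bmod p$ with $r=e_{2k}\not\equiv 1$, produce elements of $\gamma_3(\st_G(1))$ whose image under $\psi$ is concentrated in one coordinate with values that normally generate $\gamma_3(G)$, and invoke Lemma~\ref{lem: main tool for branchness}. Your unit $r^{c-3}(1-r^N)\equiv r^{c-3}(1-r)$ plays exactly the role of the paper's quantity $e_{p^n-3k}-e_{p^n-k}^2\equiv e_{2k}^{-3}(1-e_{2k})$; the paper exhibits two explicit triple commutators where you solve a linear system, but both implementations are sound.

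The genuine gap is in your boundary case $N=3$ (forced $p=3$, $t=n-1$). The sections of $[b_0,b_k]$, $[b_k,b_{2k}]$, $[b_0,b_{2k}]$ at the positions $k,2k,p^n$ are commutators such as $[a,b]$ and $[b,a^r]$, not powers of a fixed element, so your ``coefficient matrix with determinant $1-r^3$'' only makes sense modulo $\gamma_3(G)$: in each coordinate the section is a power of the class of $[a,b]$ in $G'/\gamma_3(G)$, and inverting the matrix yields a product $u_1^{\alpha_1}u_2^{\alpha_2}u_3^{\alpha_3}\in\st_G(1)'$ whose image is $([a,b]z_1,z_2,z_3)$ with $z_1,z_2,z_3\in\gamma_3(G)$ uncontrolled, not $([a,b],1,1)$. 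Commutating this with elements of $\st_G(1)$ leaves residue in $\gamma_4(G)$ in the unwanted coordinates, so you never achieve the exact one-coordinate concentration that Lemma~\ref{lem: main tool for branchness} requires; and absorbing that residue would presuppose $\gamma_3(G)\times\gamma_3(G)\times\gamma_3(G)\subseteq\psi(\gamma_3(\st_G(1)))$, which is precisely what is being proved. The paper resolves this case by exact cancellation of $a$-power sections rather than linear algebra on commutators: with $x=e_{2k}\equiv 2\bmod 3$ and $y=x^{-1}\bmod 3^n$, the elements $bb_k^{-x}$ and $b_{2k}^y b^{-1}$ have section exactly $1$ at positions $2k$ and $k$ respectively, so $[b_{2k},b,bb_k^{-x}]$ and $[b_k^y,b,b_{2k}^yb^{-1}]$ are honestly concentrated at position $p^n$, with values $[a,b,ba^{-x^2}]$ and $[a,b,a^yb^{-1}]$; one then needs the generation check $\langle ba^{-x^2},a^yb^{-1}\rangle=\langle ba^{-x^2},a^{y-x^2}\rangle=G$, which holds because $y-x^2\equiv 1\bmod 3$. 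Note that this requires allowing a $b$ in the section at the target position (your scheme insists on pure $a$-powers there, and you showed yourself that no suitable pure-$a$ element exists when $N=3$), and it requires that final generation computation, which your sketch omits.
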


\begin{proof}
We first deal with the case where the condition on $\delta_m$ in the statement holds vacuously.
This happens if $Y\subseteq \{k,p^n-k\}$, or equivalently, if $|Y|\le 2$.
It follows that $k=p^{n-1}$ and $p=2$ or $p=3$.
Since $\mathbf{e}\not\in\E(p^n)$, we necessarily have $p=3$ and $x:=e_{3^n-k}\equiv 2 \pmod 3$.
Let $y$ be the inverse of $x$ modulo $3^n$, so that $y\equiv 2 \pmod 3$.
Then we have
\[
\psi([b_{2k},b,bb_k^{-x}]) = (1,\ldots,1,[a,b,ba^{-x^2}])
\]
and
\[
\psi([b_k^y,b,b_{2k}^yb^{-1}]) = (1,\ldots,1,[a,b,a^yb^{-1}]).
\]
Now observe that $\langle ba^{-x^2},a^yb^{-1} \rangle=\langle ba^{-x^2},a^{y-x^2} \rangle$ coincides with $G$, since $y-x^2\equiv 1\pmod 3$.
This proves that $\psi(\gamma_3(\st_G(1)))=\gamma_3(G)\times \cdots \times \gamma_3(G)$ in this case.

Next we assume that $|Y|\ge 3$.
From (\ref{delta_m is 0}) we get
\begin{equation}
\label{eq: e_ik=e_2ik}
e_{ik}\equiv e_{2k}^{i-1}\bmod p
\end{equation}
for all $i=2,\ldots,p^{n-t}-1$.
Since $\mathbf{e}$ is not constant modulo $p$ for the indices in $Y$, it follows that
$e_{2k}\not\equiv 1\bmod p$.

We observe that
\begin{equation}
\label{eq:comm3}
\psi([b,b_k,b_k b_{-k}^{-e_{p^n-k}}])
=
(1,\ldots,1,[a,b,ba^{-e_{2k}e_{p^n-k}}],1,\ldots,1),
\end{equation}
where the displayed component is the $k$-th one.

Now set $g:=b^{e_{p^n-3k}}b_{2k}^{-e_{p^n-k}}$.
Note that the condition $|Y|\ge 3$ implies that $p^n>3k$, and so $e_{p^n-3k}$ is a well defined entry
of $\mathbf{e}$. 
Then
\[
\psi(g)
=
(\ldots, a^{e_{p^n-3k}-e_{p^n-k}^2}, \ldots, 1, \ldots),
\]
where the displayed components are the $k$-th one and the $(p^n-k)$-th one.
If $y$ is the inverse of $e_{2k}$ modulo $p^n$, we get
\begin{equation}
\label{eq:comm4}
\psi([b_{-k}^y,b_k,g])
=
(1,\ldots,1,[a,b,a^{e_{p^n-3k}-e_{p^n-k}^2}],1,\ldots,1),
\end{equation}
where the only non-trivial component is the $k$-th one.
  
Let $s:=p^{n-t}$.
By \eqref{eq: e_ik=e_2ik} and Fermat's Little Theorem, we have
\[
e_{p^n-3k}-e_{p^n-k}^2
=
e_{(s-3)k}-e_{(s-1)k}^2
\equiv
e_{2k}^{s-4}-e_{2k}^{2(s-2)}
\equiv
e_{2k}^{-3}(1-e_{2k})
\not\equiv
0 \bmod p,
\]
since $e_{2k}\not\equiv 1\pmod p$.
Hence $\{a^{e_{p^n-3k}-e_{p^n-k}^2},ba^{-e_{2k}e_{p^n-k}}\}$ is a generating set of $G$, and the result follows from \eqref{eq:comm3} and \eqref{eq:comm4}.
\end{proof}

After these preliminary results we can now prove Theorem~\ref{thm: main 1}.

\begin{proof}[Proof of Theorem~\ref{thm: main 1}]
We first prove (ii).
So we assume that $\mathbf{e}\not\in\E(p^n)$.
By the previous theorems in this section, $G$ is regular branch over $G'$ or
$\gamma_3(G)$.
If $G$ is regular branch over $G'$, we know that $\psi(\st_G(1)')=G'\times \cdots \times G'$.
Since the stabilizer of every vertex in the first level coincides with $\st_G(1)$ and $G$ is fractal by Lemma \ref{lem: e in F}, it follows that $\psi(\st_G(1))$ is a subdirect product of
$G\times \cdots \times G$.
It immediately follows that
$\psi(\gamma_3(\st_G(1)))=\gamma_3(G) \times \cdots \times \gamma_3(G)$, and $G$ is also regular branch over $\gamma_3(G)$.
This completes the proof of (ii).

Now we assume that $\mathbf{e}\not\in\E'(2^n)$ and prove (i).
If $G$ is not IS, then it is regular branch over $G'$ by Theorem~\ref{thm:e_p^n-k}.
Consequently
\[
\psi(\st_G(1)'') = G''\times \cdots \times G'',
\]
and $G$ is weakly regular branch over $G''$.
Let us now suppose that $G$ is IS.
For every $g_1,g_2\in G$ there exist $h_1,h_2\in\st_G(1)$ such that the $k$-th components of $\psi(h_1)$ and $\psi(h_2)$ are $g_1$ and $g_2$, respectively.
Then
\[
\psi([[b,b_k]^{h_1},[b_k,b_{2k}]^{h_2}])
=
(1,\ldots,1,[[a,b]^{g_1},[b,a^{e_{p^n-k}}]^{g_2}],1,\ldots,1),
\]
where the non-trivial component is at the $k$-th position.
Here we need to use that $2k\ne p^n$, since $\mathbf{e}\not\in\E'(2^n)$.
Now since $G$ is IS, the component $e_{p^n-k}$ is not divisible by $p$.
Thus the sets $\{[a,b]^{g_1}\mid g_1\in G\}$ and $\{[b,a^{e_{p^n-k}}]^{g_2}\mid g_2\in G\}$ are generating sets for $G'$.
Then $G''=\langle[[a,b]^{g_1},[b,a^{e_{p^n-k}}]^{g_2}]\mid g_1,g_2\in G\rangle^G$ and thus  $G$ is weakly regular branch over $G''$ by Lemma~\ref{lem: main tool for branchness}.
\end{proof}

We can deduce an improved version of Vovkivsky's result from
Theorem~\ref{thm: main 1}.

\begin{cor}
\label{cor:extend vovkivsky}
Let $G$ be a periodic GGS-group over the $p^n$-adic tree.
Then $G$ is regular branch over $\gamma_3(G)$.
\end{cor}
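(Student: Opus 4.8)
The plan is to deduce the corollary directly from part~(ii) of Theorem~\ref{thm: main 1}. By our standing convention $\mathbf{e}\in\F(p^n)$ throughout, so $G$ is spherically transitive and fractal by Lemma~\ref{lem: e in F}, and Theorem~\ref{thm: main 1}(ii) then guarantees that $G$ is regular branch over $\gamma_3(G)$ as soon as $\mathbf{e}\notin\E(p^n)$. Consequently the whole corollary reduces to the single claim that \emph{periodicity forces} $\mathbf{e}\notin\E(p^n)$, and this is what I would prove.

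To this end I would argue by contradiction, assuming $\mathbf{e}\in\E(p^n)$, and extract the relevant constraint from the periodicity criterion~\eqref{eq: periodicity criterion}. The key observation is that the exponent $t=t(\mathbf{e})$ lies in the admissible range $\{0,1,\ldots,n-1\}$, so the congruence $S_t\equiv 0\pmod{p^{t+1}}$ is among those imposed by periodicity; in particular $S_t\equiv 0\pmod p$. Now $S_t=e_{p^t}+e_{2p^t}+\cdots+e_{p^n-p^t}$ ranges exactly over the indices of $\{p^t,2p^t,\ldots,p^n-p^t\}$, which equals $Y$ precisely because $\mathbf{e}\in\E(p^n)$ forces $Y$ to be maximal.

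Since membership in $\E(p^n)$ also means $\mathbf{e}$ is constant modulo $p$ on $Y$, say $e_{ip^t}\equiv c\not\equiv 0\pmod p$ for all $i=1,\ldots,p^{n-t}-1$, the sum $S_t$ consists of exactly $p^{n-t}-1$ entries each congruent to $c$. Hence $S_t\equiv(p^{n-t}-1)c\equiv -c\pmod p$, using $p\mid p^{n-t}$ because $n-t\ge 1$. As $c$ is invertible modulo $p$, this yields $S_t\not\equiv 0\pmod p$, contradicting periodicity. Therefore $\mathbf{e}\notin\E(p^n)$, and the corollary follows from Theorem~\ref{thm: main 1}(ii).

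I do not anticipate a serious obstacle: the argument is essentially a one-line congruence once the right relation $S_t$ is singled out. The only points requiring care are verifying that $t$ belongs to the range of indices for which~\eqref{eq: periodicity criterion} applies, and checking that the number of summands $p^{n-t}-1$ is coprime to $p$ (equivalently $n-t\ge 1$), so that the leftover term $-c$ does not accidentally vanish modulo $p$.
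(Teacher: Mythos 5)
Your proposal is correct and follows essentially the same route as the paper: reduce via Theorem~\ref{thm: main 1}(ii) to showing $\mathbf{e}\notin\E(p^n)$, then note that if $\mathbf{e}\in\E(p^n)$ the periodicity condition gives $S_t=(p^{n-t}-1)e_{p^t}\equiv -e_{p^t}\equiv 0\pmod p$, contradicting the invertibility of $e_{p^t}$. Your extra remarks (that $t\le n-1$ so the congruence $S_t\equiv 0\pmod{p^{t+1}}$ is indeed available, and that $p^{n-t}-1\equiv -1\pmod p$) just make explicit what the paper leaves implicit.
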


\begin{proof}
It suffices to show that $\mathbf{e}\not\in\E(p^n)$.
Otherwise all $e_{ip^t}$ have the same (non-zero) value modulo $p$, for
$i=1,\ldots,p^{n-t}-1$.
Now since $G$ is periodic, from \eqref{eq: periodicity criterion} we get
\[
S_t = e_{p^t} + e_{2p^t} + \cdots + e_{p^n-p^t} = (p^{n-t}-1)e_{p^t} \equiv 0 \pmod p.
\]
This implies that $e_{p^t}\equiv 0\pmod p$, which is a contradiction.
\end{proof}

We close this section by showing that $G$ is also regular branch over $\gamma_3(G)$ in some special cases with $\mathbf{e}\in\mathcal{E}(p^n)$.

\begin{theorem}
\label{thm:partially constant}
Suppose that $\mathbf{e}$ is constant on $Y$ and constant equal to $0$ outside $Y$, without $\mathbf{e}$ being constant.
Then $G$ is a regular branch group over $\gamma_3(G)$.
\end{theorem}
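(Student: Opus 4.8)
The plan is to isolate the one configuration not already covered and then manufacture the required localized elements by direct commutator calculations. After the standing normalization $e_k=1$ with $k=p^t$, the hypotheses say that $e_i=1$ whenever $k\mid i$ and $e_i=0$ otherwise; write $s:=p^{n-t}$. First I would dispose of the easy cases by invoking earlier results: if $G$ is not IS, then $G$ is regular branch over $G'$ by Theorem~\ref{thm:e_p^n-k}, and if $G$ is IS but $Y$ is not maximal, then $G$ is regular branch over $\gamma_3(G)$ by Theorem~\ref{thm:IS_with_zeros}. This leaves the genuinely new case in which $G$ is IS and $Y=\{k,2k,\dots,p^n-k\}$ is maximal. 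Here $\mathbf e\in\E(p^n)$ and all the determinants $\delta_m$ of the two preceding theorems vanish, so none of those results applies; moreover $t\ge 1$ because $\mathbf e$ is not constant. I would also record that the relevant case forces $s\ge 3$: indeed $s=2$ forces $p=2$ and $t=n-1$, that is $\mathbf e\in\E'(2^n)$, the exceptional dihedral-type family on which the construction below degenerates.

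The key observation is that every $b_{ik}$ has all of its nontrivial $\psi$-components at the $s$ coordinates that are multiples of $k$, and on those coordinates its section equals $a$ except at the single position $ik$, where it equals $b$. Thus $b_0,b_k,\dots,b_{(s-1)k}$ reproduce, on these $s$ coordinates, exactly the pattern of a constant defining vector, but now with sections $a,b$ of full order $p^n$. I would exploit this by passing to difference elements and their commutators, whose $\psi$-images have very small support. Concretely, $[b_k,b_{2k}]$ is supported on the two coordinates $k$ and $2k$, with values $[b,a]$ and $[a,b]$; bracketing it against the difference $b_{2k}b_{3k}^{-1}$ (which is supported on the coordinates $2k$ and $3k$) cancels everything except the overlap, yielding
\[
\psi\bigl([\,[b_k,b_{2k}],\,b_{2k}b_{3k}^{-1}\,]\bigr)=(1,\dots,1,\,[a,b,ba^{-1}],\,1,\dots,1),
\]
the nontrivial entry sitting in the $2k$-th position. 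This element lies in $\gamma_3(\st_G(1))$, and the cancellation is exactly what requires $s\ge 3$, so that the two dipoles overlap in a single coordinate. Since $G$ is fractal by Lemma~\ref{lem: e in F}, conjugating this element by elements of $\st_G(1)$ whose section at that coordinate ranges over all of $G$ produces $(1,\dots,1,[a,b,ba^{-1}]^{g},1,\dots,1)\in\psi(\gamma_3(\st_G(1)))$ for every $g\in G$, so the entire normal closure $\langle [a,b,ba^{-1}]\rangle^{G}$ can be localized at one coordinate.

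By Lemma~\ref{lem: main tool for branchness}, applied with $S=\{[a,b,ba^{-1}]\}$ and $N=\gamma_3(\st_G(1))$, it then follows that $L\times\cdots\times L\subseteq\psi(\gamma_3(\st_G(1)))$ for $L=\langle [a,b,ba^{-1}]\rangle^{G}$. Hence the whole theorem reduces to the identity $\gamma_3(G)=\langle [a,b,ba^{-1}]\rangle^{G}$, which is the crux and the main obstacle. The difficulty is structural rather than computational: an abelianized bookkeeping of the commutators of the difference generators shows that the only classes in $\gamma_3(G)/\gamma_4(G)$ that can be isolated at a single coordinate are the multiples of the class of $[a,b,b][a,b,a]^{-1}$, while the classes of $[a,b,a]$ and of $[a,b,b]$ individually cannot (a total-charge invariant forbids such ``monopoles''). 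Consequently the statement can hold only if $\gamma_3(G)/\gamma_4(G)$ is cyclic and generated by that single class.

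So the last and hardest step is to prove this cyclicity. I would do so by computing $\psi([a,b,a])$ and $\psi([a,b,b])$ explicitly from $\psi(b)=(a^{e_1},\dots,a^{e_{p^n-1}},b)$ with $e_{ik}=1$ for all $i$, reading off the resulting relation between $[a,b,a]$ and $[a,b,b]$ modulo $\gamma_4(G)$, and checking that $[a,b,b][a,b,a]^{-1}$ generates $\gamma_3(G)/\gamma_4(G)$; a downward induction along the (finite-index) lower central series, using that $L$ already surjects onto $\gamma_3(G)/\gamma_4(G)$, then upgrades this to the exact equality $L=\gamma_3(G)$. It is here that both the constant values of $\mathbf e$ on $Y$ and the inequality $s\ge 3$ are indispensable; granting the identity, Lemma~\ref{lem: main tool for branchness} delivers $\gamma_3(G)\times\cdots\times\gamma_3(G)\subseteq\psi(\gamma_3(\st_G(1)))$ and hence regular branchness over $\gamma_3(G)$.
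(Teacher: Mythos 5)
There are two genuine gaps. The most serious one is that you never prove the identity $\langle [a,b,ba^{-1}]\rangle^{G}=\gamma_3(G)$, which you yourself single out as the crux, and the route you sketch toward it would not work as stated. The missing ingredient is a one-line relation valid in every group satisfying the hypothesis: in the remaining case one has $t\ge 1$, so $e_1=e_{p^n-1}=0$, and then every coordinate of $\psi([b,a])$ is a power of $a$ except the first, which is $b$, and the last, which is $b^{-1}$; since the corresponding coordinates of $\psi(b)=(a^{e_1},\ldots,a^{e_{p^n-1}},b)$ are $a^{e_1}=1$ and $b$, all coordinatewise commutators vanish and hence
\[
[b,a,b]=1, \qquad\text{equivalently}\qquad [a,b,b]=1 .
\]
This relation does all the work: it gives $\gamma_3(G)=\langle [a,b,a]\rangle^{G}$ (in $G/\langle [a,b,a]\rangle^{G}$ the element $[a,b]$ centralizes both generators, so the quotient has class at most $2$), and it turns your localized element into $[a,b,ba^{-1}]=[a,b,a^{-1}]\,[a,b,b]^{a^{-1}}=[a,b,a^{-1}]$, whose normal closure is exactly $\gamma_3(G)$ --- no passage through $\gamma_4(G)$ is needed at all. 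Without this relation, both halves of your fallback plan fail. Your ``total-charge'' claim that the class of $[a,b,a]$ cannot be isolated in a single coordinate is false in these groups: the paper isolates it directly, $\psi([b,b_k,b])=(1,\ldots,1,[a,b,a],1,\ldots,1)$, precisely because the would-be second pole $[b,a,b]$ is trivial. And the proposed ``downward induction'' upgrading $L\gamma_4(G)=\gamma_3(G)$ to $L=\gamma_3(G)$ is invalid as stated: it only yields $\gamma_3(G)=L\gamma_i(G)$ for all $i$, and to conclude you would need $G/L$ to be residually nilpotent, which you do not establish (residual nilpotence of $G$ does not pass to quotients).

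The second gap is one of coverage: you discard the case $s=p^{n-t}=2$, that is $p=2$ and $t=n-1$, on the grounds that such vectors lie in $\E'(2^n)$. But Theorem~\ref{thm:partially constant} contains no such exclusion: for $n\ge 2$ the vector whose only nonzero entry is an invertible $e_{2^{n-1}}$ is constant on $Y=\{2^{n-1}\}$, zero outside $Y$, and non-constant, so it satisfies the hypothesis; these are exactly among the ``special vectors'' (here even inside $\E'(2^n)$) that the theorem is meant to capture. Your dipole construction genuinely degenerates there (since $b_{2k}=b$ and $b_{3k}=b_k$), but the paper's element $[b,b_k,b]$ does not: its $\psi$-image involves only the coordinates $k$ and $p^n$, and the computation goes through verbatim when $p^n-k=k$. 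So even with the crux repaired, your argument proves a strictly weaker statement than the theorem. For what it is worth, your localized element $[[b_k,b_{2k}],b_{2k}b_{3k}^{-1}]$ and the computation of its $\psi$-image are correct when $s\ge 3$, and give a valid alternative to $[b,b_k,b]$; the lesson is that the profitable object to isolate is $[a,b,a]$ itself, which is both possible and immediately sufficient once the relation $[b,a,b]=1$ has been observed.
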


\begin{proof}
We have
\begin{equation}
\label{eqn:partially constant}
\psi([b,a])=(b,1,\ldots,1,a^{-1},a,1,\ldots,1,a^{-1},a,1,\ldots,1,b^{-1}),
\end{equation}
where the components equal to $a^{-1}$ are those in positions multiple of $k$.
From the assumptions about $\mathbf{e}$, we have $e_1=0$, and by taking the commutator of
(\ref{eqn:partially constant}) with $\psi(b)$, it follows that $[b,a,b]=1$.
Hence $\gamma_3(G)=\langle [a,b,a] \rangle^G$.
Now the result follows from
\[
\psi([b,b_k,b]) =(1,\ldots,1,[a,b,a],1,\ldots,1,[b,a,b]) = (1,\ldots,1,[a,b,a],1,\ldots,1).
\]
\end{proof}

\section{GGS-groups with constant defining vector}
\label{sec:constant GGS}

In this section we prove that the GGS-groups with constant defining vector are not branch, as in the case of the $p$-adic tree.
Note that, by Theorem~\ref{thm: main 1}, they are weakly branch unless $p^n=2$.

In the following, $\G$ denotes the GGS-group defined on the $p^n$-adic tree by the vector
$\mathbf{e}=(1,\ldots,1)$.
We introduce the subgroup $K=\langle ba^{-1} \rangle^{\G}$ of $\G$, and we set
$y_0=ba^{-1}$ and $y_i=y_0^{a^i}$ for all $i\in\Z$.
Then $y_i=y_j$ if $i\equiv j \pmod{p^n}$.
One can easily check that
\[
y_{p^n-1} y_{p^n-2} \cdots y_1 y_0 = 1.
\]
Also $\G/K=\langle aK \rangle=\langle bK \rangle$, and consequently $\G'\le K$ and $|\G:K|=p^n$.
The following lemma generalizes \cite[Lemma~4.2]{Fernandez-Alcober2014} to the case $n>1$, and it can be proved similarly.

\begin{lem}
\label{lem: gen properties of GGS constant}
The following hold: 
\begin{itemize}
\item[(i)]
$K=\langle y_{0},\ldots,y_{p^n-1}\rangle$.
\item[(ii)]
$K'\times\cdots\times K'\subseteq\psi(K')\subseteq \psi(\G')\subseteq K\times\cdots\times K$.
In particular $\G$ is weakly regular branch over $K'$.
\end{itemize}
\end{lem}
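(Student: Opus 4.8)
The plan is to handle the two parts in order, since part (ii) will lean on the generating set produced in part (i). For part (i), I would show that $K=\langle y_0,\ldots,y_{p^n-1}\rangle$ by first noting the trivial inclusion $\supseteq$: each $y_i=y_0^{a^i}$ is a conjugate of the generator $y_0=ba^{-1}$ of $K$, hence lies in $K=\langle ba^{-1}\rangle^{\G}$. For the reverse inclusion I would argue that the subgroup $H:=\langle y_0,\ldots,y_{p^n-1}\rangle$ is already normal in $\G$, so that $K=\langle y_0\rangle^{\G}\le H$. Normality is easy to check on generators: conjugating $y_i$ by $a$ sends it to $y_{i+1}$ (indices mod $p^n$, using $y_i=y_j$ when $i\equiv j$), so $H$ is $a$-invariant; and since $b=y_0a$, conjugation by $b$ differs from conjugation by $a$ by an inner automorphism coming from $y_0\in H$, so $H$ is also $b$-invariant. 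As $\G=\langle a,b\rangle$, this gives $H\trianglelefteq\G$, and $\supseteq$ follows. I expect this part to be routine.

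For part (ii) the central inclusion $\psi(\G')\subseteq K\times\cdots\times K$ is the starting point. Since $\G'\le\st_{\G}(1)$, I would compute $\psi$ on the normal generators of $\G'$, namely the conjugates of $[a,b]$. Using the constant defining vector $\mathbf{e}=(1,\ldots,1)$ one reads off (as in \eqref{eqn:partially constant} with $k=1$) that each coordinate of $\psi([a,b])$ is a word of the form $a^ib a^{-j}$ or $a^i b^{-1} a^{-j}$ with exponents summing to zero in each factor, hence a product of the $y_\ell$'s, placing every section in $K$. Conjugating and using self-similarity keeps the sections in $K$, giving $\psi(\G')\subseteq K\times\cdots\times K$, and $\psi(K')\subseteq\psi(\G')$ is immediate from $K'\le\G'$.

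The main obstacle, and the crux of the lemma, will be the first inclusion $K'\times\cdots\times K'\subseteq\psi(K')$; this is exactly what yields weak regular branchness over $K'$. Here I would invoke Lemma~\ref{lem: main tool for branchness} with $L=N=K'$: since $K'=\langle S\rangle^{\G}$ for $S$ a suitable set of commutators $[y_i,y_j]$, it suffices to realize, for each generator $s\in S$, a tuple $(1,\ldots,1,s,1,\ldots,1)\in\psi(K')$ with $s$ in a fixed position. The work is to produce an element of $K'$ whose image under $\psi$ has a single nontrivial coordinate equal to the desired commutator of $y$'s. I would do this by taking commutators of the $b_i$ (equivalently of lifts of the $y$'s) whose $\psi$-images have controlled support, then cancelling unwanted coordinates using that $o(a)=o(b)=p^n$ and the relation $y_{p^n-1}\cdots y_0=1$; a telescoping/bootstrapping argument over the coordinates, in the spirit of the sequences $\{g_i\}$ built in Lemmas~\ref{lem:asymmetric_e} and \ref{lem:sequence for IS vector}, isolates one commutator at a time. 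Once a single-coordinate element is in $\psi(K')$ for each $s\in S$, Lemma~\ref{lem: main tool for branchness} delivers $K'\times\cdots\times K'\subseteq\psi(K')$, and the final assertion that $\G$ is weakly regular branch over $K'$ follows at once from the definition, since $K'\ne 1$ and $K'\le\st_{\G}(1)$.
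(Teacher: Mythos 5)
Your part (i) is correct, and so are the two outer inclusions in (ii): $\psi(K')\subseteq\psi(\G')$ is immediate from $K'\le\G'$, and $\psi(\G')\subseteq K\times\cdots\times K$ follows, as you say, from $\psi([a,b])=(b^{-1}a,1,\ldots,1,a^{-1}b)=(y_1^{-1},1,\ldots,1,y_1)$ together with normality of $K$, self-similarity, and $\G'=\langle [a,b]\rangle^{\G}$. Your framework for the remaining inclusion is also the right one: $K'\trianglelefteq \G$ (characteristic in $K\trianglelefteq\G$), $K'=\langle [y_i,y_j]\rangle^{\G}$, and Lemma~\ref{lem: main tool for branchness} with $L=N=K'$ reduces everything to exhibiting, for each generator, an element of $K'$ whose $\psi$-image has a single non-trivial coordinate, in a fixed position. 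But you never exhibit such elements; you only say you ``would'' get them by a telescoping argument in the spirit of Lemmas~\ref{lem:asymmetric_e} and \ref{lem:sequence for IS vector}. Since this is precisely the mathematical content of the lemma (everything else is routine), this is a genuine gap, not a detail.

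Moreover, the technique you point to cannot work here. Those telescoping sequences isolate a coordinate only because some entry $q$ of the defining vector is divisible by $p$, so that the unwanted factor $[b^{q^i},a^{q^{i+1}}]$ vanishes once $q^i\equiv 0\bmod p^n$; for the constant vector every entry equals $1$, so the tail never dies. What does work (and is the argument behind the cited Lemma~4.2 of \cite{Fernandez-Alcober2014}) is a direct computation with two-point supports. Since $y_i=b_ia^{-1}$, one has $y_iy_j^{-1}=b_ib_j^{-1}\in K\cap\st_{\G}(1)$ and
\[
\psi(b_ib_j^{-1})=(1,\ldots,1,y_0,1,\ldots,1,y_0^{-1},1,\ldots,1),
\]
with $y_0$ at position $i$ and $y_0^{-1}$ at position $j$. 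Conjugating $b_ib_l^{-1}$ by $b_m^{d}$ with $m\neq i$ produces an element of $K$ with support contained in $\{i,l\}$ whose $i$-th coordinate is $y_0^{a^d}=y_d$. Hence, for $i,j,l$ pairwise distinct and $m\neq i$ (possible since $p^n\geq 3$; for $p^n=2$ the lemma is vacuous as $K'=1$),
\[
\psi\bigl([\,b_ib_j^{-1},\,(b_ib_l^{-1})^{b_m^{d}}\,]\bigr)=(1,\ldots,1,[y_0,y_d],1,\ldots,1),
\]
where the non-trivial entry sits at position $i$ and the commutator lies in $K'$ because both arguments lie in $K$. Since $[y_i,y_j]=[y_0,y_{j-i}]^{a^i}$, the set $S=\{[y_0,y_d]\mid 1\le d\le p^n-1\}$ satisfies $K'=\langle S\rangle^{\G}$, and Lemma~\ref{lem: main tool for branchness} (applicable because $\mathbf{e}=(1,\ldots,1)\in\F(p^n)$ makes $\G$ spherically transitive and fractal by Lemma~\ref{lem: e in F}) then yields $K'\times\cdots\times K'\subseteq\psi(K')$. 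This two-element commutator trick with overlapping supports, rather than a telescoping sequence, is the missing idea.
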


\begin{pro}
\label{pro:stG(1)'=stG(2)}
We have $\st_{\G}(1)'=\st_{\G}(2)$.
\end{pro}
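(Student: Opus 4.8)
The plan is to prove the two inclusions $\st_\G(1)'\subseteq\st_\G(2)$ and $\st_\G(2)\subseteq\st_\G(1)'$ separately, obtaining the second from a counting argument. For the first inclusion, recall that $\st_\G(1)$ fixes the first level, so the restriction of $\psi$ to $\st_\G(1)$ is a homomorphism and $\psi([g,h])=([g_{x_1},h_{x_1}],\ldots,[g_{x_{p^n}},h_{x_{p^n}}])$ for all $g,h\in\st_\G(1)$. Since $\G$ is self-similar, every section $g_{x_i}$ lies in $\G$, whence each $[g_{x_i},h_{x_i}]\in\G'$. By Theorem~\ref{thm:abelianisation} the quotient $\G/\G'\cong C_{p^n}\times C_{p^n}$ is abelian, and as $\st_\G(1)$ has index $p^n$ in $\G$ with abelian quotient we get $\G'\subseteq\st_\G(1)$. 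Therefore $\psi([g,h])\in\st_\G(1)\times\cdots\times\st_\G(1)$, that is, $[g,h]\in\st_\G(2)$; since such commutators generate $\st_\G(1)'$, we conclude $\st_\G(1)'\subseteq\st_\G(2)$.

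For the reverse inclusion I would compare indices. First, since $\st_\G(1)=\langle b_0,\ldots,b_{p^n-1}\rangle$ is generated by $p^n$ elements, each of order $p^n$, its abelianization is a quotient of $(\Z/p^n\Z)^{p^n}$, so $|\st_\G(1):\st_\G(1)'|\le p^{np^n}$. Next I would compute $|\st_\G(1):\st_\G(2)|$ exactly. As $\st_\G(1)$ is normal in $\G$ with $\G/\st_\G(1)\cong C_{p^n}$ generated by the image of $a$, there is a homomorphism
\[
\Phi\colon \st_\G(1)\longrightarrow (\G/\st_\G(1))^{p^n}\cong(\Z/p^n\Z)^{p^n},
\qquad
g\longmapsto (g_{x_1}\st_\G(1),\ldots,g_{x_{p^n}}\st_\G(1)),
\]
whose kernel is precisely $\st_\G(2)$, because a $g\in\st_\G(1)$ lies in $\st_\G(2)$ exactly when all its first-level sections fix the first level. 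The key point is to identify the image of $\Phi$. Writing the copy of $\Z/p^n\Z$ additively with the image of $a$ corresponding to $1$, the formulas for $\psi(b_j)$ give $\Phi(b_j)=\mathbf 1-\epsilon_j$ for $j=1,\ldots,p^n$, where $\mathbf 1=(1,\ldots,1)$ and $\epsilon_j$ is the $j$-th standard basis vector: in each $\psi(b_j)$ the single entry $b$ contributes a $0$ and every remaining entry $a$ contributes a $1$, which uses crucially that the defining vector is constant equal to $1$. Summing over $j$ gives $\sum_j(\mathbf 1-\epsilon_j)=p^n\mathbf 1-\mathbf 1=-\mathbf 1$, so $\mathbf 1$ lies in the image, and hence so does each $\epsilon_j=\mathbf 1-(\mathbf 1-\epsilon_j)$; thus $\Phi$ is onto and $|\st_\G(1):\st_\G(2)|=p^{np^n}$.

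Finally I would combine these facts. From the chain $\st_\G(1)'\subseteq\st_\G(2)\subseteq\st_\G(1)$ we obtain
\[
|\st_\G(1):\st_\G(1)'|=|\st_\G(1):\st_\G(2)|\cdot|\st_\G(2):\st_\G(1)'|\ge p^{np^n},
\]
and together with the bound $|\st_\G(1):\st_\G(1)'|\le p^{np^n}$ this forces $|\st_\G(2):\st_\G(1)'|=1$, that is, $\st_\G(2)=\st_\G(1)'$. I expect the main obstacle to be the exact determination of $|\st_\G(1):\st_\G(2)|$, i.e.\ the surjectivity of $\Phi$; this is precisely where the constancy of $\mathbf e$ is essential, since for a general defining vector the images $\Phi(b_j)$ need not span the whole group $(\Z/p^n\Z)^{p^n}$.
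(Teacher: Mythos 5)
Your proof is correct, but it reaches the hard inclusion $\st_{\G}(2)\subseteq\st_{\G}(1)'$ by a different mechanism than the paper. The paper argues element by element: given $g\in\st_{\G}(2)$, it writes $g\equiv b_1^{k_1}\cdots b_{p^n}^{k_{p^n}} \pmod{\st_{\G}(1)'}$, reads off from $\psi$ that $S-k_i\equiv 0 \pmod{p^n}$ for all $i$ (where $S=k_1+\cdots+k_{p^n}$), and solves this system to get $k_i\equiv 0\pmod{p^n}$, i.e.\ it shows directly that $\st_{\G}(2)/\st_{\G}(1)'$ is trivial. You instead run a counting argument: the section-mod-$\st_{\G}(1)$ map $\Phi$ is surjective onto $(\Z/p^n\Z)^{p^n}$ with kernel $\st_{\G}(2)$, so $|\st_{\G}(1):\st_{\G}(2)|=p^{np^n}$, which matches the a priori upper bound $|\st_{\G}(1):\st_{\G}(1)'|\le p^{np^n}$ coming from the generators $b_i$ having order $p^n$. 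These are dual formulations of the same linear-algebra fact, namely that the matrix $J-I$ (all-ones minus identity) is invertible over $\Z/p^n\Z$: the paper proves its kernel is trivial (your injectivity), you prove its rows span (your surjectivity), and both exploit in the same way the constancy of $\mathbf{e}$ and $o(b_i)=p^n$. What your version buys is a cleaner separation of concerns — the tree-theoretic input is confined to identifying $\ker\Phi=\st_{\G}(2)$ and computing $\Phi(b_j)=\mathbf{1}-\epsilon_j$, after which everything is finite abelian group bookkeeping; the cost is that you need the extra (easy) ingredient bounding the abelianization of $\st_{\G}(1)$, which the paper's direct manipulation absorbs silently when it reduces $g$ to a product of powers of the $b_i$. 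One cosmetic remark: your appeal to Theorem~\ref{thm:abelianisation} in the first inclusion is superfluous; $\G'\subseteq\st_{\G}(1)$ follows simply because $\G/\st_{\G}(1)$ is cyclic, generated by the image of $a$.
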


\begin{proof}
From (\ref{eqn:stG(1)}) we get
$\st_{\G}(1)'=\langle [b_i,b_j] \mid i,j=1,\ldots,p^n \rangle^{\G}\le \st_{\G}(2)$.
For the reverse inclusion, let $g\in\st_{\G}(2)$ be arbitrary.
We can write $g=hg'$, where $g'\in \st_{G}(1)'$ and $h$ is of the form
\[
h = b_1^{k_1} b_2^{k_2} \cdots b_{p^n}^{k_{p^n}},
\]
for some integers $k_i$.
Observe that $\psi(h)$ is given by the vector
\[
( b^{k_1}a^{k_2+\cdots+k_{p^n}},
a^{k_1}b^{k_2}a^{k_3+\cdots+k_{p^n}},
\ldots,
a^{k_1+\cdots+k_{p^n-2}}b^{k_{p^{n-1}}}a^{k_{p^n}},
a^{k_1+\cdots+k_{p^n-1}}b^{k_{p^n}} ).
\]
Since $h=g(g')^{-1}\in\st_{\G}(2)$ and $a$ has order $p^n$ modulo $\st_{\G}(1)$, if we set $S=k_1+\cdots+k_{p^n}$, then $S-k_i \equiv 0 \mod p^n$ for $i=1,\ldots,p^n$.
From these congruences it readily follows that $k_i\equiv 0 \mod p^n$ for all $i$, and consequently $g=g'\in\st_{\G}(1)'$, as desired.
\end{proof}

The following result generalizes \cite[Lemma~4.4]{Fernandez-Alcober2014}.

\begin{pro}
\label{pro:prod of g_i}
Let $g\in \st_{\G}(1)$ and write $\psi(g)=(g_1,\ldots,g_{p^{n}})$.
Then the following hold:
\begin{enumerate}
\item[(i)]
If $g\in \G'$ then $\prod_{i=1}^{p^n} \, g_i \in K'$.
\vspace{5pt}
\item[(ii)]
If $g\in K'$ then $\prod_{i=1}^{p^n-1} \, g_i^a g_i^{a^{2}} \cdots g_i^{a^i}\in K'$.
\end{enumerate}
\end{pro}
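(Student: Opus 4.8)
The plan is to read both identities inside the abelianization $K/K'$, viewed as a module for $\G$. Write $\lambda\colon K\to K/K'$ for the projection. Since $K\trianglelefteq\G$ and $K/K'$ is abelian, conjugation by $K$ acts trivially on $K/K'$, so the $\G$-action factors through $\G/K=\langle \bar a\rangle\cong C_{p^n}$; using $y_i^{a}=y_{i+1}$ together with $b\equiv a \bmod K$, one checks that \emph{both} $a$ and $b$ induce the same cyclic shift $\tau\colon\lambda(y_i)\mapsto\lambda(y_{i+1})$ on $K/K'$. Moreover the relation $y_{p^n-1}\cdots y_1y_0=1$ gives $\sum_{i}\lambda(y_i)=0$, so the norm operator $N:=1+\tau+\cdots+\tau^{p^n-1}$ annihilates $K/K'$ (as $\lambda(y_0)$ generates it as a $\Z[\tau]$-module); in particular $\tau+\cdots+\tau^{p^n-1}=-1$ on $K/K'$. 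By Lemma~\ref{lem: gen properties of GGS constant}, every $g\in\G'$ has all its first-level sections in $K$, so the quantities in (i) and (ii) are genuine elements of $K/K'$.

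For (i) I would define $\theta\colon\G'\to K/K'$ by $\theta(g)=\sum_{i=1}^{p^n}\lambda(g_i)$. Because the sections lie in $K$ and $K/K'$ is abelian, $\theta$ is a homomorphism. A direct computation using $b^a=b_1$ gives $\psi([a,b])=(y_1^{-1},1,\ldots,1,y_1)$, whence $\theta([a,b])=0$. Since $\G'=\langle[a,b]\rangle^{\G}$, it remains to control conjugates: conjugation by the rooted $a$ only permutes the sections cyclically, so $\theta(g^a)=\theta(g)$, while conjugation by $b$ replaces $(g_1,\ldots,g_{p^n})$ by $(g_1^a,\ldots,g_{p^n-1}^a,g_{p^n}^b)$ and, since both $a$ and $b$ act as $\tau$, yields $\theta(g^b)=\tau\,\theta(g)$. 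Iterating, $\theta(g^w)$ is a $\tau$-power of $\theta(g)$ for every $w$; as $\tau$ fixes $0$, we get $\theta([a,b]^w)=0$ for all $w$, hence $\theta\equiv0$. This is exactly the assertion $\prod_{i}g_i\in K'$.

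For (ii) I would run the same machine with the homomorphism $\Phi\colon K'\to K/K'$, $\Phi(g)=\sum_{i=1}^{p^n-1}(\tau+\tau^2+\cdots+\tau^{i})\,\lambda(g_i)$, which equals $\lambda\bigl(\prod_{i=1}^{p^n-1}g_i^{a}g_i^{a^2}\cdots g_i^{a^i}\bigr)$ and is additive because $(gh)_i=g_ih_i$. To conclude $\Phi\equiv0$ one must show $\Phi$ vanishes on a generating set of $K'$, say on the commutators $[y_i,y_j]$ (computed from $y_i=b_ia^{-1}$) together with their $K$-conjugates. \emph{The main obstacle is that, unlike $\theta$, the map $\Phi$ carries position-dependent coefficients $\tau+\cdots+\tau^i$, so it is not conjugation-equivariant}: a reindexing shows that for $g\in\G'$ one has $\Phi(g^a)-\Phi(g)=\sum_{k=0}^{p^n-2}(\tau^{k+1}-1)\lambda(g_k)$, a nonzero correction in general, and conjugation by the $y_l$ mixes shifts with section-conjugation. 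The plan is to absorb these corrections using precisely the two linear relations already in hand — the vanishing of the norm, $N=0$, and the identity $\sum_{i}\lambda(g_i)=0$ supplied by part (i) — and then to verify the vanishing on the explicit generators by Abel summation. I expect this bookkeeping, namely showing that the shift-corrections collapse under $N=0$ and part (i) so that $\Phi$ is forced to vanish on all of $K'$, to be the hard part of the argument.
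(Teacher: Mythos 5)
Your part~(i) is correct and follows essentially the same route as the paper: the paper defines $\pi(h)=\prod_i h_i$, passes to $\overline{\pi}=\pi \bmod K'$, kills the generator $[a,b]$, and uses normality of the kernel. If anything your version is the more careful one, since additivity of this map modulo $K'$ genuinely requires the sections to lie in $K$ (so that rearrangement commutators fall into $K'$ rather than merely into $\G'$), which is exactly what your restriction to $\G'$ via Lemma~\ref{lem: gen properties of GGS constant}(ii) provides; the paper asserts the homomorphism property on all of $\st_{\G}(1)$, where it does not literally hold.

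Part~(ii), however, is a genuine gap. You define $\Phi$, note its additivity, and then explicitly defer the entire content of the statement --- proving that $\Phi$ vanishes on $K'$ --- as ``the hard part''. Since $K'$ is the normal closure in $\G$ of the commutators $[y_i,y_j]$ and, as you yourself observe, $\Phi$ is not conjugation-equivariant, this deferred step \emph{is} the proposition; none of it is carried out. Moreover, the plan as stated would fail: after applying the norm relation, the conjugation correction is $\Phi(g^a)-\Phi(g)=\tau\,\Theta(g)$, where $\Theta(g):=\sum_{j=1}^{p^n}\tau^{j}\lambda(g_j)$ is a \emph{twisted} sum. Part~(i) only supplies the untwisted identity $\sum_j\lambda(g_j)=0$, and $\Theta$ is an independent functional which that identity cannot control: indeed $\Theta$ does not even vanish on $\G'$, since $\Theta([a,b])=(1-\tau)\lambda(y_1)\neq 0$. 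So no formal combination of ``$N=0$ plus part~(i)'' makes the corrections collapse; what is needed is new information about which section-tuples actually occur for elements of $K'$. Concretely, the missing argument is: show $\Theta$ is additive with $\Theta(g^a)=\Theta(g^b)=\tau\Theta(g)$; compute $\psi([y_i,y_j])$ explicitly (its nontrivial entries are $y_1^{-1},\,y_2,\,y_2^{-1},\,y_1$ at positions $i-1,\,i-2,\,j-2,\,j-1$); deduce $\Theta\equiv 0$ on $K'$; conclude that on $K'$ one has $\Phi(g^a)=\Phi(g)$ and $\Phi(g^b)=\tau\Phi(g)$, so that $\ker(\Phi|_{K'})$ is normal in $\G$; and finally evaluate $\Phi([y_i,y_j])=0$. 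None of these steps appears in your proposal. For comparison, the paper disposes of (ii) with a citation --- ``this can be proved exactly as in \cite[Lemma~4.4]{Fernandez-Alcober2014}'' --- and the bookkeeping you postpone is precisely the content of that cited proof.
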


\begin{proof}
(i)
For every $h\in\st_{\G}(1)$ we define $\pi(h)=\prod_{i=1}^{p^n} \, h_i$, where
$\psi(h)=(h_1,\ldots,h_{p^n})$.
Since $\st_{\G}(1) = \langle b_i \mid i=1,\ldots,p^n \rangle$ and
\[
\pi(b_i) = a^{i-1} b a^{p^n-i} = (ba^{-1})^{a^{-i+1}} = y_{-i+1} \in K,
\]
it follows that $\pi(\st_{\G}(1))\subseteq K$.
Then the map $\overline\pi:\st_{\G}(1)\rightarrow K/K'$ given by $\overline\pi(h)=\pi(h)K'$ is a group homomorphism, and since $\ker\overline\pi$ is clearly invariant under conjugation by $a$, we have
$\ker\overline\pi\trianglelefteq \G$.
Now observe that $\psi([a,b])=(b^{-1}a,1,\ldots,1,a^{-1}b)$ implies that $[a,b]\in\ker\pi$.
Hence $\G'=\langle [a,b] \rangle^{\G}\le \ker\overline\pi$ and (i) follows.

(ii)
This can be proved exactly as in \cite[Lemma~4.4]{Fernandez-Alcober2014}.
\end{proof}

\begin{cor}
\label{cor:components in K'st_G(l-1)}
Let $g\in K'\st_{\G}(\ell)$ for some $\ell\in\mathbb{N}$.
If $\psi(g)=(x,1,\ldots,1,y)$ then both $x$ and $y$ lie in $K'\st_{\G}(\ell-1)$.
\end{cor}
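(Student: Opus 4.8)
The plan is to work modulo the subgroup $M:=K'\st_{\G}(\ell-1)$, which is normal in $\G$ because both $K'$ and $\st_{\G}(\ell-1)$ are. Writing $g=cs$ with $c\in K'$ and $s\in\st_{\G}(\ell)$, and setting $\psi(c)=(c_1,\ldots,c_{p^n})$ and $\psi(s)=(s_1,\ldots,s_{p^n})$, I would first note that each $s_i\in\st_{\G}(\ell-1)\subseteq M$, since $s$ fixes the $\ell$-th level. As $\psi(g)=(c_1s_1,\ldots,c_{p^n}s_{p^n})=(x,1,\ldots,1,y)$, this gives $x\equiv c_1$ and $y\equiv c_{p^n}$ modulo $M$, so the whole corollary reduces to proving that $c_1\in M$ and $c_{p^n}\in M$.

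Next I would extract two relations among the sections $c_i$. Reading off the middle components of $\psi(g)$, for every index $i=2,\ldots,p^n-1$ we have $c_is_i=1$, hence $c_i=s_i^{-1}\in\st_{\G}(\ell-1)\subseteq M$. Since $c\in K'\le\G'$, Proposition~\ref{pro:prod of g_i}(i) gives $c_1c_2\cdots c_{p^n}\in K'\subseteq M$; reducing this relation modulo $M$ and killing the trivial middle factors yields the first relation $c_1c_{p^n}\in M$.

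The second relation comes from Proposition~\ref{pro:prod of g_i}(ii), applied to $c\in K'$: the weighted product $\prod_{i=1}^{p^n-1} c_i^a c_i^{a^2}\cdots c_i^{a^i}$ lies in $K'\subseteq M$. The \emph{key point} is that this product collapses modulo $M$. For each index $i\ge 2$ the section $c_i$ lies in the normal subgroup $\st_{\G}(\ell-1)$, so every conjugate $c_i^{a^j}$ again lies in $\st_{\G}(\ell-1)\subseteq M$ and the entire $i$-th factor becomes trivial mod $M$; only the $i=1$ factor, which is simply $c_1^a$, survives. Hence $c_1^a\in M$, and normality of $M$ forces $c_1\in M$. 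Combining this with the relation $c_1c_{p^n}\in M$ then gives $c_{p^n}\in M$, whence $x=c_1s_1\in M$ and $y=c_{p^n}s_{p^n}\in M$, as required.

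I expect the main obstacle to be precisely the second step: recognizing that the weighted product of Proposition~\ref{pro:prod of g_i}(ii), which a priori entangles all of $c_1,\ldots,c_{p^n-1}$ together with their $a$-conjugates in a fixed noncommutative order, degenerates modulo $M$ to the single contribution $c_1^a$. Once this is seen, the order of the factors is irrelevant (all but one are trivial in the quotient), and the remainder is routine bookkeeping with the normal subgroups $K'$ and $\st_{\G}(\ell-1)$.
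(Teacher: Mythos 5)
Your proof is correct and takes essentially the same approach as the paper's: the paper likewise deduces $xy\in K'\st_{\G}(\ell-1)$ from part (i) and $x^a\in K'\st_{\G}(\ell-1)$ from part (ii) of Proposition~\ref{pro:prod of g_i}, then concludes by normality of $K'\st_{\G}(\ell-1)$. Your write-up simply makes explicit the decomposition $g=cs$ and the reduction modulo $K'\st_{\G}(\ell-1)$ that the paper's two-line proof leaves implicit.
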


\begin{proof}
Part (i) of Proposition~\ref{pro:prod of g_i} implies that $xy\in K'\st_{\G}(\ell-1)$, and part (ii) that
$x^a\in K'\st_{\G}(\ell-1)$.
Thus $x,y\in K'\st_{\G}(\ell-1)$.
\end{proof}

\begin{lem}
\label{lem:G/Kstab l}
For every $\ell\ge 2$ the quotient $Q_{\ell}=\G/K'\st_{\G}(\ell)$ is a $p$-group of class $\ell$ and order $p^{(\ell+1)n}$.
\end{lem}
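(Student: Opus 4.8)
The plan is to pass to the quotient $\G/K'$ and to identify $Q_\ell$ with a lower central quotient of it. First I would describe the structure of $\G/K'$. By Lemma~\ref{lem: gen properties of GGS constant}(i) we have $K=\langle y_0,\dots,y_{p^n-1}\rangle$, and since $\langle a\rangle\cap K=1$ and $\G=K\langle a\rangle$, the quotient $\G/K'$ is a split extension $(K/K')\rtimes\langle \overline a\rangle$ with $K/K'$ abelian and $\langle\overline a\rangle\cong C_{p^n}$. Writing the conjugation action of $\overline a$ as multiplication by $x$, the group $K/K'$ becomes a cyclic module over $R=\Z[C_{p^n}]=\Z[x]/(x^{p^n}-1)$, generated by the image of $y_0$; the relation $y_{p^n-1}\cdots y_0=1$ gives $N\cdot\overline{y_0}=0$ with $N=1+x+\cdots+x^{p^n-1}$ the norm element. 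Thus $K/K'$ is a quotient of $R/(N)$, and using the weakly regular branch property of Lemma~\ref{lem: gen properties of GGS constant}(ii) to exclude further relations I would check that $K/K'\cong R/(N)$, a free abelian group of rank $p^n-1$.

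The reduction to a single identity is as follows. For $\ell\ge2$ we have $\st_\G(\ell)\subseteq\st_\G(2)=\st_\G(1)'\subseteq\G'\subseteq K$ by Proposition~\ref{pro:stG(1)'=stG(2)}, so the image of $\st_\G(\ell)$ in $\G/K'$ lands inside $K/K'$. Since $K/K'$ is abelian, a routine computation yields $\gamma_{i+1}(\G/K')=I^i(K/K')$ for every $i\ge1$, where $I=(x-1)R$ is the augmentation ideal. The crux of the proof is then the identity
\[
K'\st_\G(\ell)=K'\gamma_{\ell+1}(\G)\qquad(\ell\ge2),
\]
that is, the stabilizer filtration and the lower central filtration coincide modulo $K'$. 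Granting it, $Q_\ell=(\G/K')/\gamma_{\ell+1}(\G/K')$ is precisely the maximal quotient of $\G/K'$ of nilpotency class $\ell$; as $\gamma_\ell/\gamma_{\ell+1}=I^{\ell-1}(K/K')/I^\ell(K/K')\ne1$, the class is exactly $\ell$, and $Q_\ell$ is a $p$-group since $K/K'$ is a $p$-group modulo each $I^j(K/K')$.

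For the order I would multiply the lower central factors of $\G/K'$. By Theorem~\ref{thm:abelianisation} the bottom factor $(\G/K')/\gamma_2(\G/K')=\G/\G'\cong C_{p^n}\times C_{p^n}$ has order $p^{2n}$. For the higher factors I work in $R/(N)$ with $u:=x-1$, where $N$ becomes $p^n+\binom{p^n}{2}u+\cdots+u^{p^n-1}$; in the basis $1,u,\dots,u^{j-1}$ of $(K/K')/I^j(K/K')$ the defining relations form an upper triangular integer matrix with every diagonal entry equal to $p^n$, so $\lvert(K/K'):I^j(K/K')\rvert=p^{jn}$ and each factor $I^{j-1}(K/K')/I^j(K/K')$ has order $p^n$. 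Hence
\[
\lvert Q_\ell\rvert=p^{2n}\cdot(p^n)^{\ell-1}=p^{(\ell+1)n},
\]
as claimed.

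It remains to establish the key identity, which I would prove by induction on $\ell$, and this is where I expect the real work to lie. The inclusion $\gamma_{\ell+1}(\G)\subseteq K'\st_\G(\ell)$ can be obtained from the inductive hypothesis $\gamma_\ell(\G)\subseteq K'\st_\G(\ell-1)$ by a section argument that pushes commutators with $\G$ down one level. The reverse inclusion $\st_\G(\ell)\subseteq K'\gamma_{\ell+1}(\G)$ is the delicate one: here I would apply $\psi$ and use Proposition~\ref{pro:prod of g_i} together with Corollary~\ref{cor:components in K'st_G(l-1)}, which controls the sections of sparsely supported elements of $K'\st_\G(\ell)$, to write an element of $\st_\G(\ell)$ modulo $K'$ as a product of sections lying in $\gamma_\ell(\G)$ modulo $K'$, times the augmentation factor produced by the $a$-action. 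The main obstacle is exactly this matching of the geometric stabilizer filtration with the algebraic filtration $I^\ell(K/K')$; once it is secured, both the order and the nilpotency class drop out of the module computation above.
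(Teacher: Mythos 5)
Your outline is coherent, but it has two genuine gaps, and they sit exactly where the real work of this lemma lies. The first is your claim that $K/K'\cong R/(N)$ is free abelian of rank $p^n-1$, which you justify only by saying you would ``use the weakly regular branch property to exclude further relations.'' That claim is precisely Theorem~\ref{thm:structure G/K'} of the paper, which is proved \emph{after} this lemma and \emph{by means of} it: the way one rules out extra relations (equivalently, torsion) in $K/K'$ is to bound the finite quotients $Q_\ell=\G/K'\st_{\G}(\ell)$ from below, i.e.\ to prove the order statement of the present lemma, and only then deduce injectivity of the natural surjection from $\langle x\rangle\ltimes\Z^{p^n-1}$ onto $\G/K'$. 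Lemma~\ref{lem: gen properties of GGS constant}(ii) by itself gives no handle on putative relations among $y_0,\dots,y_{p^n-1}$ modulo $K'$ (note that the $y_i$ do not lie in $\st_{\G}(1)$, so one cannot even apply $\psi$ to them directly; and the obvious abelian quotient $\G/\G'$ only constrains the sum of the exponents). So your step (a) is either circular or simply missing its proof.

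The second gap is the inclusion $\st_{\G}(\ell)\subseteq K'\gamma_{\ell+1}(\G)$ in your key identity, which you yourself flag as ``where the real work lies'' but do not carry out. Granting your step (a) and the easy inclusion $\gamma_{\ell+1}(\G)\subseteq K'\st_{\G}(\ell)$, this inclusion is \emph{equivalent} to the lower bound $|Q_\ell|\ge p^{(\ell+1)n}$, i.e.\ to the other half of the lemma; in the paper it is the content of claim (2), namely $[b,a,b,\overset{\ell-2}{\ldots},b]^{p^{n-1}}\notin K'\st_{\G}(\ell)$, proved by induction on $\ell$ using the section formula $\psi([b,a,b,\overset{i}{\ldots},b])=([b,a,\overset{i}{\ldots},a],1,\ldots,1,[a,b,\overset{i}{\ldots},b])$, Corollary~\ref{cor:components in K'st_G(l-1)}, and a Dedekind-law argument resting on weak regular branchness over $K'$. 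Naming Proposition~\ref{pro:prod of g_i} and Corollary~\ref{cor:components in K'st_G(l-1)} as the tools is not a substitute for that induction. What is correct and verifiable in your proposal is the module-theoretic superstructure: the splitting $\G/K'=(K/K')\rtimes\langle\overline a\rangle$, the identification $\gamma_{i+1}(\G/K')=I^i(K/K')$, the triangular-matrix computation giving each factor $I^{j-1}(K/K')/I^j(K/K')$ order $p^n$, and the resulting derivation of both the class and the order of $Q_\ell$. But that superstructure rests entirely on (a) and (b), which together are the lemma (plus the subsequent structure theorem) in disguise; as written, the proposal assumes what it must prove, and filling the gaps would force you back to essentially the paper's induction.
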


\begin{proof}
It is obvious that $Q_{\ell}$ is a finite $p$-group, since $\G/\st_{\G}(\ell)$ is so.
The lemma will be proved if we show that $|Q_{\ell}:Q_{\ell}'|=p^{2n}$, that
$|\gamma_i(Q_{\ell}):\gamma_{i+1}(Q_{\ell})|=p^n$ for $2\le i\le \ell$, and that
$\gamma_{\ell+1}(Q_{\ell})=1$.

First of all, observe that
$Q_{\ell}/Q_{\ell}' \cong \G/\G'\st_{\G}(\ell) = \G/\G' \cong C_{p^n}\times C_{p^n}$,
by using that $\st_{\G}(2)\le \G'$ from Proposition~\ref{pro:stG(1)'=stG(2)}, and Theorem~\ref{thm:abelianisation}.
Hence
\begin{equation}
\label{eqn:exp LCS Q_l}
\exp \gamma_i(Q_{\ell})/\gamma_{i+1}(Q_{\ell}) \mid \exp Q_{\ell}/Q_{\ell}' = p^n
\end{equation}
for every $i\ge 2$.

Let us use the bar notation modulo $K'\st_{\G}(\ell)$.
Then $Q_{\ell}'=\langle [\overline b,\overline a], \gamma_3(Q_{\ell}) \rangle$.
Since $A_{\ell}=K/K'\st_{\G}(\ell)$ is an abelian normal subgroup of $Q_{\ell}$ and
$Q_{\ell}=\langle \overline a,A_{\ell} \rangle=\langle \overline b,A_{\ell} \rangle$, it follows that
\begin{equation}
\label{eqn:gamma_i(Q_l) mod next}
\gamma_i(Q_{\ell})
=
\langle [\overline b,\overline a,\overline b,\overset{i-2}{\ldots},\overline b],
\gamma_{i+1}(Q_{\ell}) \rangle
=
\langle [\overline b,\overline a,\overset{i-1}{\ldots},\overline a], \gamma_{i+1}(Q_{\ell}) \rangle
\end{equation}
for every $i\ge 2$.
From (\ref{eqn:exp LCS Q_l}) we get $|\gamma_i(Q_{\ell}):\gamma_{i+1}(Q_{\ell})|\le p^n$ for $i\ge 2$.
Hence the proof will be complete if we show that:
\begin{enumerate}
\item[(1)]
$[b,a,b,\overset{\ell-1}{\ldots},b]\in K'\st_{\G}(\ell)$.
\item[(2)]
$[b,a,b,\overset{\ell-2}{\ldots},b]^{p^{n-1}}\not\in K'\st_{\G}(\ell)$. 
\end{enumerate}
for every $\ell\ge 2$.
Indeed (1) then shows that $\gamma_{\ell+1}(Q_{\ell})=1$, while (2) shows that
$|\gamma_i(Q_{\ell}):\gamma_{i+1}(Q_{\ell})|
\ge
|\gamma_i(Q_i):\gamma_{i+1}(Q_i)|\ge p^n$, by
applying it with $i$ in the place of $\ell$.
Note that, according to (\ref{eqn:gamma_i(Q_l) mod next}), (1) is equivalent to
$[b,a,\overset{\ell}{\ldots},a]\in K'\st_{\G}(\ell)$ and (2) is equivalent to
$[b,a,\overset{\ell-1}{\ldots},a]^{p^{n-1}}\not\in K'\st_{\G}(\ell)$.

We prove (1) and (2) by induction on $\ell\ge 2$.
Suppose first that $\ell=2$.
We have
\[
\psi([b,a]) = \psi(b^{-1}b^{a})=(a^{-1}b,1,\ldots,1,b^{-1}a)
\]
and
\begin{equation}
\label{psi of [b,a,b]}
\psi([b,a,b]) = ([a^{-1}b,a],1,\ldots,1,[b^{-1}a,b]) = ([b,a],1,\ldots,1,[a,b]).
\end{equation}
The latter shows that $[b,a,b]\in\st_{\G}(2)$ and (1) holds for $m=2$.
On the other hand, if $[b,a]^{p^{n-1}}\in K'\st_{\G}(2)$ then
Corollary~\ref{cor:components in K'st_G(l-1)} implies that
$(b^{-1}a)^{p^{n-1}}\in K'\st_{\G}(1)=\st_{\G}(1)$.
Since $(b^{-1}a)^{p^{n-1}}\equiv a^{p^{n-1}} \pmod{\st_{\G}(1)}$, this is a contradiction and
(2) holds for $\ell=2$.

Now we assume that $\ell\ge 3$.
From (\ref{psi of [b,a,b]}), we get
\begin{equation}
\label{psi of [b,a,b,...,b]}
\psi([b,a,b,\overset{i}{\ldots},b])
=
( [b,a,\overset{i}{\ldots},a],1,\ldots,1,[a,b,\overset{i}{\ldots},b] )
\end{equation}
for every $i\ge 1$.
Thus, by the induction hypothesis,
\[
\psi([b,a,b,\overset{\ell-1}{\ldots},b])
\in 
(K'\st_{\G}(\ell-1)\times\cdots\times K'\st_{\G}(\ell-1))\cap \Ima\psi.
\]
Now observe that
\begin{align*}
(K'\st_{\G}(\ell-1)\times\cdots
&\times K'\st_{\G}(\ell-1))\cap \Ima\psi
\\
&=
(K'\times\cdots\times K')(\st_{\G}(\ell-1)\times\cdots\times \st_{\G}(\ell-1))\cap \Ima\psi
\\
&=
(K'\times\cdots\times K')(\st_{\G}(\ell-1)\times\cdots\times \st_{\G}(\ell-1)\cap \Ima\psi)
\\
&\subseteq
\psi(K') \psi(\st_{\G}(\ell))
=
\psi(K'\st_{\G}(\ell)),
\end{align*}
where the second equality follows from Dedekind's Law and the inclusion from
$\G$ being weakly regular branch over $K'$.
Thus $[b,a,b,\overset{\ell-1}{\ldots},b]\in K'\st_{\G}(\ell)$ and (1) holds.
Now if $[b,a,b,\overset{\ell-2}{\ldots},b]^{p^{n-1}}\in K'\st_{\G}(\ell)$ then
from (\ref{psi of [b,a,b,...,b]}) and Corollary~\ref{cor:components in K'st_G(l-1)} we get
$[b,a,\overset{\ell-2}{\ldots},a]^{p^{n-1}}\in K'\st_{\G}(\ell-1)$, contrary to the induction hypothesis.
This proves (2).
\end{proof}

Our next step is to determine the structure of the factor group $\G/K'$.

\begin{theorem}
\label{thm:structure G/K'}
The quotient $\G/K'$ is isomorphic to the semidirect product
$\langle x \rangle \ltimes \Z^{p^n-1}$, where the element $x$ is of order $p^n$ and acts on $\Z^{p^n-1}$ via the companion matrix of the polynomial $X^{p^n-1}+X^{p^n-2}+\cdots+X+1$.
\end{theorem}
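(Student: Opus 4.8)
The plan is to recognise $\G/K'$ as a split extension of the abelian group $A:=K/K'$ by a cyclic group and then to pin $A$ down as a module over $\Z[\langle x\rangle]$. Writing $x=aK'$, the relation $a^{p^n}=1$ gives $x^{p^n}=1$, while $\G/K\cong C_{p^n}$ is generated by $aK$, so $x$ has order exactly $p^n$ and $\langle x\rangle\cap A=1$; since $\langle x\rangle$ surjects onto $\G/K$ we obtain $\G/K'=\langle x\rangle\ltimes A$. Because $y_i=y_0^{a^i}$, conjugation by $x$ acts on the generators $\bar y_0,\dots,\bar y_{p^n-1}$ of $A$ as the cyclic shift $\bar y_i\mapsto\bar y_{i+1}$. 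Thus $A$ is a cyclic module over $R=\Z[X]/(X^{p^n}-1)$ (with $X$ acting as $x$) generated by $v=\bar y_0$, and the identity $y_{p^n-1}\cdots y_0=1$ forces $N\cdot v=0$, where $N=1+X+\cdots+X^{p^n-1}$. Since $X^{p^n}-1=(X-1)N$, the group $A$ is in fact a cyclic module over $M:=\Z[X]/(N)$, which is free abelian of rank $p^n-1$ with basis $1,X,\dots,X^{p^n-2}$, on which $X$ acts by the companion matrix of $N$. Writing $A=M/L$, the theorem follows once I show $L=0$.

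To control $L$ I would use the finite quotients coming from the level stabilizers. Put $A_\ell:=K/(K\cap K'\st_{\G}(\ell))$, the image of $K$ in $Q_\ell=\G/K'\st_{\G}(\ell)$. By Lemma~\ref{lem:G/Kstab l} together with $\G/K\cong C_{p^n}$ we have $|A_\ell|=|Q_\ell|/p^n=p^{\ell n}$. The crucial point is that $(x-1)^\ell$ annihilates $A_\ell$. Indeed, $A_\ell$ is a normal abelian subgroup of $Q_\ell$ containing $Q_\ell'$, and $x$ acts trivially on the abelian quotient $Q_\ell/Q_\ell'$, so $g^xg^{-1}\in Q_\ell'=\gamma_2(Q_\ell)$ for every $g\in A_\ell$; more generally, if $g\in\gamma_i(Q_\ell)$ then $g^xg^{-1}\in[\gamma_i(Q_\ell),Q_\ell]=\gamma_{i+1}(Q_\ell)$, so $(x-1)$ lowers the lower-central filtration of $A_\ell$ by one step. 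Since $Q_\ell$ has class $\ell$, iterating $\ell$ times lands in $\gamma_{\ell+1}(Q_\ell)=1$, that is, $(x-1)^\ell A\subseteq\ker(A\to A_\ell)$.

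A counting argument then closes the gap. On $M$ the element $X-1$ is a non-zero-divisor (as $\gcd(N,X-1)=1$ in $\mathbb{Q}[X]$, using $N(1)=p^n\neq0$, together with Gauss's lemma since $N$ is monic), and $M/(X-1)M\cong\Z/N(1)\Z=\Z/p^n\Z$, whence $|M/(X-1)^\ell M|=p^{\ell n}$. Combining $(x-1)^\ell A\subseteq\ker(A\to A_\ell)$ with $|A_\ell|=p^{\ell n}$ gives
\[
p^{\ell n}=|A_\ell|\le |A/(x-1)^\ell A|=|M/((X-1)^\ell M+L)|\le|M/(X-1)^\ell M|=p^{\ell n},
\]
so every inequality is an equality and $(X-1)^\ell M+L=(X-1)^\ell M$, i.e.\ $L\subseteq(X-1)^\ell M$ for every $\ell$. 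Finally $\bigcap_\ell(X-1)^\ell M=0$: reducing modulo $p$ one has $N\equiv(X-1)^{p^n-1}$, so $(X-1)^{p^n-1}M\subseteq pM$ and hence $(X-1)^{(p^n-1)k}M\subseteq p^kM$, which forces the intersection into $\bigcap_k p^kM=0$. Therefore $L=0$, $A\cong M$, and $\G/K'\cong\langle x\rangle\ltimes\Z^{p^n-1}$ with $x$ acting by the companion matrix of $N$, as claimed.

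The main obstacle is the middle step, namely that $(x-1)^\ell$ kills $A_\ell$; this is where the self-similar and branch structure enters, through the lower-central data of $Q_\ell$ supplied by Lemma~\ref{lem:G/Kstab l}. Everything else—the splitting, the module presentation, the index computation on $M$, and the vanishing of $\bigcap_\ell(X-1)^\ell M$—is formal once the orders $|A_\ell|=p^{\ell n}$ are in hand. I would be most careful in checking that the passage between the level filtration $K\cap\st_{\G}(\ell)$ and the lower central series of $Q_\ell$ behaves exactly as the argument requires, since that is the only place the geometry of the tree is genuinely used.
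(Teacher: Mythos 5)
Your proof is correct and is essentially the paper's own argument in module-theoretic dress: the paper constructs the explicit epimorphism $\alpha\colon P=\langle x\rangle\ltimes\Z^{p^n-1}\twoheadrightarrow\G/K'$ and shows $\ker\alpha=1$ by comparing $|P:\gamma_{\ell+1}(P)|=p^{(\ell+1)n}$ (a companion-matrix determinant) with $|Q_\ell|=p^{(\ell+1)n}$ and $\gamma_{\ell+1}(Q_\ell)=1$ from Lemma~\ref{lem:G/Kstab l}, and then uses $\bigcap_i\gamma_i(P)=1$; identifying $\Z^{p^n-1}$ with your $M$, one has $\gamma_{\ell+1}(P)=(X-1)^{\ell}M$, so this is precisely your chain $L\subseteq(X-1)^{\ell}M$ for all $\ell$ together with $\bigcap_{\ell}(X-1)^{\ell}M=0$. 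The differences are presentational only: you verify the splitting $\G/K'=\langle x\rangle\ltimes K/K'$ and the annihilation $(x-1)^{\ell}A_{\ell}=0$ by hand (and prove $\bigcap_{\ell}(X-1)^{\ell}M=0$ by reduction modulo $p$ rather than citing residual nilpotence of $P$), where the paper obtains these for free from the homomorphism $\alpha$ and the functoriality of the lower central series.
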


\begin{proof}
Let $P$ be the semidirect product in the statement of the theorem.
We first study the lower central series of $P$.
Set $V=\Z^{p^n-1}$ and write $(v_1,\ldots,v_{p^n-1})$ for the canonical basis of $V$.
Since we use right actions of groups, if $M$
is the companion matrix of $f(X)=X^{p^n-1}+X^{p^n-2}+\cdots+X+1$, then $v^x=vM$ for every $v\in V$.

For every $W\le V$ that is normal in $P$, we have $[W,P]=\{[w,x] \mid w\in W\}$, since $V$ is abelian and the map $v\mapsto [v,x]$ is a homomorphism on $V$.
Since $P/V$ is cyclic, we have $P'=[V,P]$, and consequently
\[
\gamma_i(P) = \{ [v,x,\overset{i-1}{\ldots},x] \mid v\in V \} = \{ v(M-I)^{i-1} \mid v\in V \},
\]
where $I$ stands for the identity matrix of order $p^n-1$.
Hence the rows of $(M-I)^{i-1}$ are generators of $\gamma_i(P)$.
Since $V$ is a free abelian group of finite rank, it follows that
\begin{equation}
\label{eqn:P/gamma_i(P)}
|P:\gamma_i(P)| = p^n \, |V:\gamma_i(P)| = p^n \, \det (M-I)^{i-1} = p^n f(1)^{i-1} = p^{in}
\end{equation}
for every $i\ge 2$, since $f(X)$ is the characteristic polynomial of $M$.
Observe also that $\cap_{i\ge 1} \, \gamma_i(P)=1$, since $P$ is residually a finite $p$-group.

Now recall that $K=\langle y_j \mid j=0,\ldots,p^n-1 \rangle$ with $y_j^a=y_{j+1}$ for all $j$.
In particular $y_{p^n-2}^a=y_{p^n-1}=y_0^{-1}\ldots y_{p^n-2}^{-1}$.
Hence the assignments $x\mapsto a$ and $v_i\mapsto y_{i-1}$ define a homomorphism $\alpha$ from $P$ onto $Q=\G/K'$.
Suppose that $1\ne w\in\ker\alpha$ and let $\ell\ge 1$ be such that
$w\not\in \gamma_{\ell+1}(P)$.
Then $\alpha$ induces an epimorphism from $P/\gamma_{\ell+1}(P)$ onto
$Q/\gamma_{\ell+1}(Q)$ whose kernel is not trivial, and consequently
\[
|P:\gamma_{\ell+1}(P)| > |Q:\gamma_{\ell+1}(Q)| \ge |Q_{\ell}:\gamma_{\ell+1}(Q_{\ell})|,
\]
where $Q_{\ell}=\G/K'\st_{\G}(\ell)$.
This is a contradiction, since $|P:\gamma_{\ell+1}(P)|=p^{(\ell+1)n}$ by
(\ref{eqn:P/gamma_i(P)}) and
$|Q_{\ell}:\gamma_{\ell+1}(Q_{\ell})|=|Q_{\ell}|=p^{(\ell+1)n}$ by Lemma~\ref{lem:G/Kstab l}.
Thus $\ker\alpha=1$ and we conclude that $P\cong \G/K'$, as desired.
\end{proof}

Now we can generalise \cite[Theorem~3.7]{Fernandez-Alcober2017} and show that $\G$ is not a branch group.

\begin{theorem}
Let $\G$ be a GGS-group  with constant defining vector.
Then $\G$ is not a branch group.
\end{theorem}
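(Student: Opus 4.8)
The plan is to show that $\G$ fails to be branch by proving that the index $|\G:\rst_\G(\ell)|$ is unbounded as $\ell\to\infty$. The key strategic device is Theorem~\ref{thm:structure G/K'}, which identifies $\G/K'$ with the concrete semidirect product $P=\langle x\rangle\ltimes\Z^{p^n-1}$. Since $K'$ is ``small'' relative to $\G$ (it is where the branch structure concentrates by Lemma~\ref{lem: gen properties of GGS constant}), I expect the rigid stabilizers $\rst_\G(\ell)$ to be controlled modulo $K'$, and the torsion-free abelian part $\Z^{p^n-1}$ of $P$ to prevent the rigid stabilizers from having finite index. The heart of the argument is that a rigid stabilizer element is very rigid: an element $g\in\rst_\G(u)$ for a first-level vertex $u$ must act trivially on all sections except the one at $u$, and this constraint, pushed through $\psi$ and read off in $\G/K'\cong P$, forces its image into a subgroup of infinite index.

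First I would analyze $\rst_\G(1)$ and its image in $\G/K'$. Using Corollary~\ref{cor:components in K'st_G(l-1)} together with part (i) of Proposition~\ref{pro:prod of g_i}, an element $g\in\rst_\G(x_i)$ has $\psi(g)$ supported (modulo $K'\st_\G(\ell)$ for each $\ell$) only in the $i$-th coordinate, and the product constraint $\prod_j g_j\in K'$ then pins the $i$-th section into $K'$ as well in the limit. Iterating this down the tree via Corollary~\ref{cor:components in K'st_G(l-1)} should show that the sections of rigid-stabilizer elements accumulate in $K'$, so that modulo $K'$ the rigid stabilizers are severely constrained. Concretely, I would aim to prove that the image of $\rst_\G(\ell)$ in $\G/K'\cong P$ lies inside $\gamma_{\ell+1}(P)$, or at least inside a term of the lower central series of $P$ that grows with $\ell$; by \eqref{eqn:P/gamma_i(P)} we have $|P:\gamma_{\ell+1}(P)|=p^{(\ell+1)n}\to\infty$, which gives the unboundedness of $|\G:\rst_\G(\ell)|$ and contradicts branchness.

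The mechanism translating ``rigidity'' into ``deep in the lower central series'' is the observation already used in the proof of Lemma~\ref{lem:G/Kstab l}: the elements $[b,a,\overset{i}{\ldots},a]$ generate $\gamma_i(Q_\ell)$ modulo the next term, and by \eqref{psi of [b,a,b,...,b]} their sections are again commutators one step shorter. Reversing this, an element whose first-level sections all lie deep in the series (because rigidity forces all-but-one section trivial and the surviving section to be absorbed into $K'$ by the product relations) must itself sit deep in the series of $\G/K'$. I would make this precise by an induction on $\ell$ mirroring the induction in Lemma~\ref{lem:G/Kstab l}, using the free abelian structure of $V=\Z^{p^n-1}$ in $P$ to guarantee that $\cap_{i}\gamma_i(P)=1$ and that each quotient $\gamma_i(P)/\gamma_{i+1}(P)$ is nontrivial, so the index genuinely diverges.

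The main obstacle, and the step I would spend the most care on, is controlling the \emph{images} of rigid stabilizers precisely rather than merely the level stabilizers. Level stabilizers $\st_\G(\ell)$ are easy, but rigidity is a much stronger condition, and I must verify that the repeated application of Corollary~\ref{cor:components in K'st_G(l-1)} actually forces rigid-stabilizer sections into deeper and deeper terms of $\gamma_\bullet(P)$, rather than merely into $K'\st_\G(\ell-1)$ at a single level. The delicate point is that $K'$ itself is only weakly regular branch (Lemma~\ref{lem: gen properties of GGS constant}), so one does not get $\rst_\G(\ell)\subseteq K'\times\cdots\times K'$ for free; instead I expect to need the precise book-keeping of both parts of Proposition~\ref{pro:prod of g_i} to trap the surviving section in $K'$ in the inverse limit over all $\ell$. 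Once that trapping is established, comparison with the infinite-index filtration of $P$ closes the argument.
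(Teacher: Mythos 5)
There is a genuine logical gap at the very core of your plan. You propose to contradict branchness by showing that $|\G:\rst_\G(\ell)|$ is \emph{unbounded} as $\ell\to\infty$, with the quantitative target $|\G:\rst_\G(\ell)|\ge p^{(\ell+1)n}$ coming from the filtration $\gamma_{\ell+1}(P)$ of $P\cong\G/K'$. But unboundedness of these indices is not a contradiction: being branch only requires each individual index $|\G:\rst_\G(\ell)|$ to be \emph{finite}, and in every infinite branch group (the Grigorchuk group, the Gupta--Sidki groups, \dots) the sequence $|\G:\rst_\G(\ell)|$ is a sequence of finite numbers tending to infinity, since $\rst_\G(\ell)\le\st_\G(\ell)$ and the level stabilizers have indices growing without bound. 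So even if you succeeded in proving that the image of $\rst_\G(\ell)$ in $P$ lies in $\gamma_{\ell+1}(P)$, you would have proved nothing incompatible with $\G$ being branch. What is needed is that a \emph{single, fixed} rigid level stabilizer has \emph{infinite} index; this is what the paper establishes, by showing $\rst_{\G'}(1)=\psi^{-1}(K'\times\cdots\times K')\le K'$ exactly, and then invoking \cite[Lemma~3.6]{Fernandez-Alcober2017} to pass from ``$\G$ branch'' to ``$|\G':\rst_{\G'}(1)|$ finite'', which forces $|\G:K'|<\infty$ and contradicts Theorem~\ref{thm:structure G/K'}.

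There are also two technical problems in the mechanism you describe, both of which the paper's proof is designed to avoid. First, part (i) of Proposition~\ref{pro:prod of g_i} applies only to elements of $\G'$, whereas a generic element of $\rst_\G(x_i)$ need not lie in $\G'$ (its image in $\G/\G'\cong C_{p^n}\times C_{p^n}$ can be a nontrivial power of $bG'$); for such an element the product map only lands in $K$, not $K'$. This is precisely why the paper works with the rigid stabilizers \emph{of the subgroup} $\G'$ and needs the transfer lemma from \cite{Fernandez-Alcober2017}. Second, your use of Corollary~\ref{cor:components in K'st_G(l-1)} is circular: that corollary assumes $g\in K'\st_\G(\ell)$ as a hypothesis, so it cannot be applied to an arbitrary rigid-stabilizer element to conclude membership in such cosets; and even granting it, ``trapping the surviving section in $K'$ in the inverse limit'' would require $\bigcap_{\ell}K'\st_\G(\ell)=K'$, i.e.\ closedness of $K'$ in the congruence topology, which is neither proved in the paper nor needed by its argument. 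The correct route is the direct one: for $g\in\rst_{\G'}(v)$ all sections but one are trivial, so Proposition~\ref{pro:prod of g_i}(i) pins the remaining section inside $K'$ in one step, with no limiting process at all.
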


\begin{proof}
Let $L=\psi^{-1}(K'\times\overset{p^n}{\cdots} \times K')$.
By (ii) of Lemma~\ref{lem: gen properties of GGS constant} we have $L\le \rst_{\G'}(1)$.
For the reverse inclusion, we show that $\rst_{\G'}(v)\le L$ for every vertex $v$ of the first level.
Let $g\in \rst_{\G'}(v)$ and let $h$ be the component of $\psi(g)$ at the position of $v$.
Then from (i) of Proposition~\ref{pro:prod of g_i} we get $h\in K'$, and so $g\in L$, as desired.

Now assume by way of contradiction that the group $\G$ is branch.
Then $|\G:\rst_{\G}(1)|$ is finite and from \cite[Lemma~3.6]{Fernandez-Alcober2017} 
also $|\G':\rst_{\G'}(1)|$ is finite.
Thus $|\G:L|$ is finite, and since $L\le K'$ by Lemma~\ref{lem: gen properties of GGS constant}, also
$|\G:K'|$ is finite.
This is a contradiction, since Theorem~\ref{thm:structure G/K'} shows that the factor group
$\G/K'$ is infinite.
\end{proof}

\bibliographystyle{amsplain}

\bibliography{biblio1}

\end{document}